\documentclass[12pt,reqno]{amsart}

\usepackage[a4paper, margin=3cm]{geometry}

\usepackage{amssymb} 
\usepackage{tikz-cd}
\usepackage[T1]{fontenc}

\usepackage[colorlinks=true, allcolors=blue]{hyperref}

\usepackage{enumitem}
\setlist[enumerate,1]{label=\arabic*.,leftmargin=*}
\newlist{romanenum}{enumerate}{1}
\setlist[romanenum,1]{label=(\roman*),leftmargin=*}

\numberwithin{table}{section}

\numberwithin{equation}{section}

\newtheorem{theorem}{Theorem}[section]
\newtheorem{lemma}[theorem]{Lemma}
\newtheorem{corollary}[theorem]{Corollary}
\theoremstyle{definition}
\theoremstyle{remark}
\newtheorem{remark}[theorem]{Remark}

\newcommand{\N}{\mathbb{N}}
\newcommand{\Z}{\mathbb{Z}}
\newcommand{\Q}{\mathbb{Q}}
\newcommand{\R}{\mathbb{R}}
\newcommand{\C}{\mathbb{C}}
\newcommand{\F}{\mathbb{F}}

\DeclareMathOperator{\Ext}{Ext}
\DeclareMathOperator{\GL}{GL}
\DeclareMathOperator{\Hom}{Hom}

\DeclareMathOperator{\img}{im}
\DeclareMathOperator{\rk}{rk}
\DeclareMathOperator{\vspan}{span}

\newcommand{\Alt}{\mathchoice{{\textstyle\bigwedge}}%
    {{\bigwedge}}%
    {{\textstyle\wedge}}%
    {{\scriptstyle\wedge}}}

\newcommand\cspin{$^c$}
\newcommand{\spinc}{\text{spin\cspin} }%
\newcommand{\Spinc}{\text{Spin\cspin} }%

\DeclareMathOperator{\Spin}{Spin}

\usepackage{newfloat}
\DeclareFloatingEnvironment[
    fileext=lod,
    name=Diagram,
    placement=hp,
    within=section]{fdiagram}


\begin{document}

\title{\texorpdfstring{Spin{\cspin}}{Spinc} structures on real Bott manifolds}

\author[A.~G\k{a}sior]{Anna G\k{a}sior}
\address{Maria Curie-Sk{\l}odowska University, Institute of Mathematics\\
pl. Marii Curie-Sk{\l}odowskiej 1, 20-031 Lublin, Poland}
\email{anna.gasior@umcs.pl}

\author[R.~Lutowski]{Rafa{\l} Lutowski}
\address{Institute of Mathematics, Faculty of Mathematics, Physics and Informatics\\ University of Gda\'nsk\\
Wita Stwosza 57, 80-308 Gda\'nsk, Poland}
\email{rafal.lutowski@ug.edu.pl}

\date{July 1, 2024}

\begin{abstract}
We give a necessary and sufficient condition for existence of \spinc structures on real Bott manifolds.
\end{abstract}
\subjclass[2020]{Primary 57R15; Secondary  53C29, 20H15, 57S25}
\keywords{Real Bott manifolds, spin structure, spinc structure}

\maketitle

\section{Introduction}

Let $\Gamma$ be a fundamental group of an $n$-dimensional \emph{real Bott manifold} $M$. From \cite{KM09} we know that $\Gamma$ is a Bieberbach group (and $M$ is a flat manifold) of \emph{diagonal type}. This means that:
\begin{romanenum}[widest=ii]
\item $\Gamma$ is torsion-free and fits into a short exact sequence
\begin{equation}
\label{eq:ses}
0\longrightarrow \Z^n\stackrel{}{\longrightarrow} \Gamma \stackrel{\pi}{\longrightarrow} C_2^d \longrightarrow 1,
\end{equation}
for some natural number $d$, where $C_2$ is the cyclic group of order $2$;
\item there exists a basis of $\Z^n$, such that the image of $\varrho \colon C_2^d\to \GL(n,\Z)$ is a group of diagonal matrices, where
\begin{equation}
\label{eq:conjugation}
\varrho_g(z)=\gamma z \gamma^{-1},
\end{equation}
for $z \in \Z^n, g\in C_2^d$ and $\gamma\in\Gamma$ such that $\pi(\gamma)=g$.
\end{romanenum}
Note that in the case of real Bott manifolds $d=n$.

In this note, we examine the existence of \spinc structures on orientable real Bott manifolds. 
Recall, that for $n \geq 3$, $\Spin(n)$ is the universal (and a double) cover of $SO(n)$, and
\[
\Spin^c(n) := \Spin(n) \times S^1 / \langle (-1,-1) \rangle = \Spin(n) \times_{C_2} S^1,
\]
where $C_2 = \{ \pm 1 \}$ is the center of $\Spin(n)$. \Spinc structure on $M$ is defined as an equivariant lift of its tangent bundle, regarded as a $SO(n)$-principal bundle. This means, in particular, that there exists a principal $\Spin^c(n)$ bundle $P$ and a map $\Lambda \colon P \to TM$, such that the following diagram commutes:
\begin{center}
\begin{tikzpicture}
\node (x) at (2,0) {$M$};
\node (q) at (0,-1) {$TM$};
\node (p) at (0,1) {$P$};
\node (sp) at (-3,1) {$P \times \Spinc(n)$};
\node (so) at (-3,-1) {$TM \times SO(n)$};
\draw[->] (sp) -- (p);
\draw[->] (sp) --node[right]{$\Lambda \times \lambda$} (so);
\draw[->] (so) -- (q);
\draw[->] (p) --node[left]{$\Lambda$} (q);
\draw[->] (p) -- (x);
\draw[->] (q) -- (x);
\end{tikzpicture}
\end{center}

Note that $\lambda$ is the unique map defined by the following commutative diagram
\[
\begin{tikzpicture}
\node[anchor=east] (product) at (-5em,2em) {$\Spin(n) \times S^1$};
\node (spin) at (0em,2em) {$\Spin(n)$}; 
\node[anchor=west] (so) at (5em,2em) {$SO(n)$};
\node[anchor=north] (spinc) at (0em,-1em) {$\Spinc(n)$};
\draw[->] (product) --node[above, inner sep=.3em] {\footnotesize$p$} (spin);
\draw[->] (spin) --node[above, inner sep=.3em] {\footnotesize$l$} (so);
\draw[->] (product) --node[below left, inner sep=.3em] {\footnotesize$\nu$} (spinc);
\draw[->, dashed] (spinc) --node[below right, inner sep=.3em] {\footnotesize$\lambda$} (so);
\end{tikzpicture}
\]
where $p$ is the projection to the first coordinate, $l$ is the covering map and $\nu$ is the natural homomorphism.

\Spinc structures are, in a natural way, of the interest in the case of those manifolds, which do not admit any spin structure. 
Spin structures on real Bott manifolds were considered in \cite{G17} and \cite{D18}. As extensions of these results, the problem of existence of spin structures on generalized real Bott manifolds was considered in \cite{DU19} and \cite{GGP23}, and the case of flat manifolds of diagonal type -- in \cite{GS14} and \cite{LPPS19}. Up to now, the problem of existence of \spinc structures for the flat manifolds of diagonal type has been considered only in the case of Hantzsche-Wendt manifolds in \cite{LPS22}. In this paper, we extend the results to the family of real Bott manifolds.

Every real Bott manifold $M$ is determined -- up to diffeomorphism -- by a certain strictly upper triangular matrix $A$ with coefficients in $\F_2$.
We call $A$ a \emph{Bott matrix}, and we denote the manifold $M$ by $M(A)$. Since $M$ has so nice combinatorial description, it is natural to ask if it is possible to give a similar condition, under which $M$ admits a \spinc structure. Our final Theorem \ref{theorem:spinc_combinatorial} presents this kind of approach.

The structure of the article is as follows. Section \ref{sec:spinc} describes general facts for cohomology groups of a flat manifold $M$, which are crucial in determining whether $M$ admits a \spinc structure. Section \ref{sec:rbm} restricts those considerations to the specific case, when $M$ is a real Bott manifold. In particular, we get a description of the image of the map $H^2(M,\Z) \to H^2(M,\F_2)$ induced by reduction coefficients modulo $2$. Finally, Section \ref{sec:rbm_spinc} gathers previous results to give necessary and sufficient conditions for the existence of a \spinc structure on $M$. These are then used in Section \ref{sec:numbers} to show some results on number of \spinc real Bott manifolds in low dimensions.

\section{\texorpdfstring{\Spinc}{Spinc} structures on flat manifolds}
\label{sec:spinc}

This section contains some preliminary considerations on the existence of \spinc structures on an orientable $n$-dimensional flat manifold $M$. Let $\Gamma = \pi_1(M)$. Since $M$ is an Eilenberg-MacLane space of type $K(\Gamma,1)$, cohomology groups of $M$ and $\Gamma$ coincide, see \cite[Section 2.1]{Luc10} and \cite[Theorem I]{EMcL45}.

For every $i \in \N$, let 
\[
\beta^{(i)} \colon H^i(\Gamma,\F_2) \to H^{i+1}(\Gamma,\F_2) \quad\text{and}\quad \tilde \beta^{(i)} \colon H^i(\Gamma,\F_2) \to H^{i+1}(\Gamma,\Z)
\]
be the Bockstein maps corresponding to the extensions
\begin{equation}
\label{eq:bockstein4}
0 \longrightarrow \F_2 \longrightarrow \Z/4 \longrightarrow \F_2 \longrightarrow 0
\end{equation}
and
\begin{equation*}
0 \longrightarrow \Z \stackrel{\iota}{\longrightarrow} \Z \stackrel{\rho}{\longrightarrow} \F_2 \longrightarrow 0
\end{equation*}
respectively. Note that $\iota$ and $\rho$ are given by multiplication by $2$ and reduction modulo $2$, respectively. By \cite[Chapter 3.E]{H02}, we have commutative Diagram \ref{diagram:bockstein}, with horizontal and vertical sequences being exact. Note that $\iota^{(i)}$ and $\rho^{(i)}$ are the homomorphisms induced by $\iota$ and $\rho$, respectively. By \cite[page 49]{F00}, $M$ admits a \spinc structure if and only if $w_2(M) \in \img \rho^{(2)}$, where $w_2(M)$ is the second Stiefel-Whitney class of $M$. Using   Diagram \ref{diagram:bockstein} we immediately get
\begin{corollary} \ 
\begin{enumerate}
\item If $w_2(M) \in \img \beta^{(1)}$, then $M$ admits a \spinc structure.
\item If $w_2(M) \not\in \ker \beta^{(2)}$, then $M$ does not admit any \spinc structure.
\end{enumerate}
\end{corollary}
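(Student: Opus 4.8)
The plan is to read off two structural facts from Diagram \ref{diagram:bockstein} and feed them into the stated criterion that $M$ admits a \spinc structure if and only if $w_2(M) \in \img \rho^{(2)}$. First, exactness of the horizontal long exact sequence attached to $0 \to \Z \xrightarrow{\iota} \Z \xrightarrow{\rho} \F_2 \to 0$ at the vertex $H^2(\Gamma,\F_2)$ gives $\img \rho^{(2)} = \ker \tilde\beta^{(2)}$; hence $M$ is \spinc exactly when $\tilde\beta^{(2)}(w_2(M)) = 0$. Second, the square of Diagram \ref{diagram:bockstein} linking the two Bockstein maps yields the factorization $\beta^{(i)} = \rho^{(i+1)} \circ \tilde\beta^{(i)}$ of the mod-$2$ Bockstein through the integral one. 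Combining the two gives the chain $\img \beta^{(1)} \subseteq \img \rho^{(2)} = \ker \tilde\beta^{(2)} \subseteq \ker \beta^{(2)}$.

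With this in hand both claims are one-line diagram chases. For part (1): if $w_2(M) = \beta^{(1)}(x)$ for some $x \in H^1(\Gamma,\F_2)$, then $w_2(M) = \rho^{(2)}\!\big(\tilde\beta^{(1)}(x)\big) \in \img \rho^{(2)}$, so $M$ admits a \spinc structure. For part (2): if $w_2(M) \notin \ker \beta^{(2)}$, then $w_2(M) \notin \ker \tilde\beta^{(2)} = \img \rho^{(2)}$, because any class annihilated by $\tilde\beta^{(2)}$ is annihilated by $\beta^{(2)} = \rho^{(3)} \circ \tilde\beta^{(2)}$; hence $M$ admits no \spinc structure.

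There is no real obstacle here — this is precisely why the authors record it as a corollary. The only point deserving a moment's attention is pinning down which square of the diagram gives $\beta = \rho \circ \tilde\beta$: it is the naturality square for the connecting homomorphisms of the morphism of short exact sequences $\big(\Z \xrightarrow{2} \Z \to \F_2\big) \longrightarrow \big(\F_2 \to \Z/4 \to \F_2\big)$ induced by reduction modulo $2$, reduction modulo $4$, and the identity on the common quotient. One then just cites exactness at the appropriate position, both facts being visible directly in Diagram \ref{diagram:bockstein}.
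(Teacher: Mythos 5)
Your argument is correct and is exactly the diagram chase the paper has in mind (the paper gives no written proof, asserting the corollary follows "immediately" from Diagram \ref{diagram:bockstein}): the factorizations $\beta^{(1)} = \rho^{(2)} \circ \tilde\beta^{(1)}$ and $\beta^{(2)} = \rho^{(3)} \circ \tilde\beta^{(2)}$ together with exactness $\img \rho^{(2)} = \ker \tilde\beta^{(2)}$ yield $\img \beta^{(1)} \subseteq \img \rho^{(2)} \subseteq \ker \beta^{(2)}$, and both claims follow from the criterion $w_2(M) \in \img \rho^{(2)}$. Your identification of the naturality square giving $\beta = \rho \circ \tilde\beta$ is also the standard one (cf.\ \cite[Chapter 3.E]{H02}).
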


\begin{fdiagram}[t]
\[
\begin{tikzcd}[sep=large]
& & H^2(\Gamma,\Z) \arrow{d}{\iota^{(2)}}\\
H^1(\Gamma,\Z) \arrow{r}{\rho^{(1)}} & H^1(\Gamma,\F_2)  \arrow{r}{\tilde{\beta}^{(1)}} \arrow{dr}{\beta^{(1)}} & H^2(\Gamma,\Z) \arrow{d}{\rho^{(2)}} \\
& & H^2(\Gamma,\F_2) \arrow{ld}[above]{\beta^{(2)}} \arrow{d}{\tilde\beta^{(2)}}\\
& H^3(\Gamma, \F_2) & H^3(\Gamma, \Z) \arrow{l}[above]{\rho^{(3)}}
\end{tikzcd}
\]
\caption{Diagram of Bockstein homomorphisms}
\label{diagram:bockstein}
\end{fdiagram}

For an abelian group $K$, the universal coefficient theorem gives the following formula:
\begin{equation}
\label{eq:universal-coeff-thm}
H^i(\Gamma, K) \cong \Hom_{\Z}(H_i(\Gamma),K) \oplus \Ext^1_{\Z}(H_{i-1}(\Gamma), K)
\end{equation}
(see \cite[Corollary 7.60]{Ro09}).

By Bieberbach theorems (see \cite[Theorem 2.1]{Sz12}), $\Gamma$ fits into the following short exact sequence
\begin{equation}
\label{eq:bieberbach}
0 \longrightarrow L \longrightarrow \Gamma \longrightarrow G \longrightarrow 1,
\end{equation}
where $G$ is finite, $L$ is a free abelian group of rank $n$, with a $G$ module structure given by conjugations in $\Gamma$, similarly as in \eqref{eq:conjugation}. Moreover, $L$ is the unique maximal abelian normal subgroup of $\Gamma$. We call $G$ the \emph{holonomy group} of $\Gamma$. The before-mentioned action of $G$ on $L$ corresponds to the \emph{holonomy representation} of $\Gamma$.

For a finitely generated abelian group $K$, by $\rk_{\Z}K$ we denote the rank of its torsion-free part.

\begin{lemma} 
\label{lemma:hi_z_rank}
Let $i \in \N$. Then,
\[
\rk_{\Z} H^i(\Gamma, \Z) = \dim_{\Q} \left(\Alt^i (L^* \otimes_{\Z} \Q)\right)^G.
\]
\end{lemma}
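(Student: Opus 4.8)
The plan is to compute the rank after tensoring with $\Q$, where the extension \eqref{eq:bieberbach} becomes much more tractable: over $\Q$ the Lyndon--Hochschild--Serre spectral sequence degenerates and the rational cohomology of the lattice $L$ is an exterior algebra.

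First I would reduce to a statement about rational cohomology. Since $M$ is a closed flat manifold, the groups $H_j(\Gamma)\cong H_j(M)$ are finitely generated, so by the universal coefficient theorem \eqref{eq:universal-coeff-thm} with $K=\Z$ the torsion-free rank of $H^i(\Gamma,\Z)$ equals $\rk_\Z H_i(\Gamma)$, the $\Ext^1_\Z(H_{i-1}(\Gamma),\Z)$ summand being finite. Applying \eqref{eq:universal-coeff-thm} again with $K=\Q$, and using that $\Ext^1_\Z(-,\Q)=0$ because $\Q$ is an injective $\Z$-module, gives $H^i(\Gamma,\Q)\cong\Hom_\Z(H_i(\Gamma),\Q)$, a $\Q$-vector space of dimension $\rk_\Z H_i(\Gamma)$. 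Hence $\rk_\Z H^i(\Gamma,\Z)=\dim_\Q H^i(\Gamma,\Q)$, and it remains to identify $H^i(\Gamma,\Q)$ with $\left(\Alt^i(L^*\otimes_\Z\Q)\right)^G$.

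Next I would run the Lyndon--Hochschild--Serre spectral sequence of \eqref{eq:bieberbach} with rational coefficients,
\[
E_2^{p,q}=H^p\!\left(G,H^q(L,\Q)\right)\Longrightarrow H^{p+q}(\Gamma,\Q).
\]
Since $G$ is finite, $|G|$ is invertible in $\Q$, so $H^p(G,-)$ vanishes on every $\Q[G]$-module for $p>0$; the spectral sequence therefore collapses onto the column $p=0$ and yields $H^i(\Gamma,\Q)\cong H^i(L,\Q)^G$. Finally, since $L\cong\Z^n$ is free abelian, $K(L,1)$ is an $n$-torus, and the natural ring isomorphism $H^*(L,\Q)\cong\Alt^*\!\left(H^1(L,\Q)\right)$ together with $H^1(L,\Q)\cong\Hom_\Z(L,\Q)=L^*\otimes_\Z\Q$ is $G$-equivariant, being natural in $L$ (here the $G$-action is the one induced by conjugation in $\Gamma$). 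Taking $G$-invariants in degree $i$ and combining with the previous step finishes the proof.

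The argument is a chain of standard facts, so there is no single deep obstacle; the points that need a little care are the finite-generation input in the first step, so that passing to $\Q$ genuinely computes the integral rank, and verifying that the torus-cohomology identifications are compatible with the $G$-actions, so that one is entitled to pass to invariants. The decisive structural fact is simply that the holonomy group $G$ is finite, which is what forces the spectral sequence to degenerate.
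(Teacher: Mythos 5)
Your proof is correct and follows essentially the same route as the paper: both reduce the integral rank to $\dim_\Q H^i(\Gamma,\Q)$ via the universal coefficient theorem and then identify $H^i(\Gamma,\Q)$ with $\left(\Alt^i(L^*\otimes_\Z\Q)\right)^G$. The only difference is that the paper cites this last identification from the literature (Hiller, Lemma 1.2), whereas you supply its standard proof via the Lyndon--Hochschild--Serre spectral sequence with rational coefficients.
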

\begin{proof}
Using formula \eqref{eq:universal-coeff-thm}, we get that the torsion-free part of $H^i(\Gamma,\Z)$ is exactly $\Hom_{\Z}(H_i(\Gamma),\Z)$. Its rank equals the dimension of $H^i(\Gamma, \Q) = \Hom_{\Z}(H_i(\Gamma),\Q)$. By \cite[Lemma 1.2]{Hi85}
\[
H^i(\Gamma, \Q) \cong \left(\Alt^i (L^* \otimes_{\Z} \Q)\right)^G.
\]
\end{proof}

\begin{remark}
Let $i \in \N$. The $\Alt^i$ denotes the $i$-th exterior power and for any $G$-module $W$
\[
W^G = \left\{ w \in W : \forall_{g \in G} gw = w \right\}.
\]
Note that $\rk_{\Z}H^i(\Gamma,\Z) = \rk_{\Z}H_i(\Gamma) = \rk_{\Z}H_i(M)$ is the $i$-th Betti number of $M$, which we will denote by $b_i(M)$ or simply $b_i$, if the underlying manifold can be read out of the context. In particular, we have
\[
b_1 = \dim_{\Q} \left(\Alt^1 (L^* \otimes_{\Z} \Q)\right)^G = \dim_{\Q} (L^* \otimes_{\Z} \Q)^G = \dim_{\Q} (L \otimes_{\Z} \Q)^G = \rk_{\Z}L^G
\]
(see \cite[Corollary 1.3]{HS86}).
\end{remark}

\begin{remark}
For an abelian group $K$, we have 
\[
H^1(\Gamma,K) = \Hom(\Gamma,K).
\]
Every homomorphism from the above group must factor by the abelianization of $\Gamma$, which equals $H_1(\Gamma)$ (see \cite[Theorem 2A.1]{H02}). Moreover, as $M$ is a connected manifold, $H_0(M) = H_0(\Gamma) = \Z$. Hence, $\Ext_{\Z}(H_0(\Gamma),K) = 0$ and the above agrees with the formula
\[
H^1(\Gamma,K) = \Hom_{\Z}(H_1(\Gamma),K).
\]
\end{remark}

\section{The diagram for real Bott manifolds}
\label{sec:rbm}

In this section, we describe most of the groups and homomorphisms, which appear in Diagram \ref{diagram:bockstein} in the case of real Bott manifolds.

Let $A = [a_{ij}] \in \F^{n \times n}$ be a strictly upper triangular matrix. The real Bott manifold $M := M(A)$ is a quotient of a flat torus $T^n = (S^1)^n$ by the free action of an elementary abelian group $C_2^n = \langle c_1, \ldots, c_n \rangle$. Identifying $S^1$ with the unit circle in $\C$ the action of $c_i$ on $(z_1,\ldots,z_n) \in T^n$ is given by
\[
c_i \cdot (z_1,\ldots,z_n) = (z_1, \ldots, z_{i-1}, -z_i, c_{i,i+1}(z_{i+1}), \ldots, c_{i,n}(z_n)),
\]
where
\[
c_{i,j}(z) = \left\{
\begin{array}{ll}
z & \text{ if } a_{ij} = 0\\
\overline{z} & \text{ if } a_{ij} = 1
\end{array}
\right.
\]
for every $1 \leq i < j \leq n$.

Assume that $\Gamma = \pi_1(M)$ fits into the short exact sequence \eqref{eq:ses}, where $d=n$, and that it is generated by $s_1, \ldots, s_n$, where 
\begin{equation}
\label{eq:generators_of_gamma}
\pi(s_i) = c_i
\end{equation}
for $1 \leq i \leq n$. 

\begin{remark}
\label{remark:generators}
To be more precise, we think of $\Gamma$ as a subgroup of $\GL(n,\R) \ltimes \R^n$ and we take $s_i=(A,a)$ where
\[
A = \operatorname{diag}(1,\ldots,1,(-1)^{a_{i,i+1}}, \ldots, (-1)^{a_{i,n}})
\]
and
\[
a = \bigg(\underbrace{0,\ldots,0}_{i-1},\frac{1}{2},0,\ldots,0\bigg),
\]
when treating $A$ as an integer matrix (see \cite[Proposition 1.6]{Sz12} and \cite[Formula (3-1)]{KM09}).
\end{remark}

\subsection{Cohomology over the integers}

\begin{lemma}
\label{lemma:h1_gamma}
$H_1(\Gamma) = \F_2^{n-b_1} \oplus \Z^{b_1}$.
\end{lemma}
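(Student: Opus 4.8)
The plan is to compute the abelianization $H_1(\Gamma)=\Gamma/[\Gamma,\Gamma]$ from a presentation of $\Gamma$, and then to identify its free rank with $b_1$ by means of the holonomy representation. Throughout, write $t_i:=s_i^2$ for $1\le i\le n$.

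First I would record the relevant relations. Using the explicit form of the generators given in Remark~\ref{remark:generators}, $s_i^2$ is the translation by the $i$-th standard basis vector, so $t_1,\dots,t_n$ generate the translation subgroup $\Z^n$ of \eqref{eq:ses}; a direct computation with those elements also gives
\[
s_i t_j s_i^{-1}=t_j^{\epsilon_{ij}},\qquad \epsilon_{ij}=\begin{cases}-1,&i<j\text{ and }a_{ij}=1,\\ \ \,1,&\text{otherwise,}\end{cases}
\]
and, for $i<j$,
\[
[s_i,s_j]=t_j^{-a_{ij}}.
\]
By the standard recipe for a presentation of the group extension \eqref{eq:ses} (with $d=n$ and section $c_i\mapsto s_i$), these relations, together with the commutativity of the $t_i$, present $\Gamma$.

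Next I would compute $[\Gamma,\Gamma]$. Since the holonomy group $C_2^n$ is abelian, $[\Gamma,\Gamma]\subseteq\Z^n$; moreover each basic commutator $[s_i,s_j]$ or $[s_i,t_j]$ of the generators is a power of a single $t_m$, and as $\Gamma$ normalizes every $t_m$ up to inversion, $[\Gamma,\Gamma]$ is simply the subgroup these commutators generate. Put $T:=\{\,j:a_{ij}=1\text{ for some }i<j\,\}$. From the formulas above every such commutator is supported on $\{t_j:j\in T\}$, while for $j\in T$ one has $t_j=[s_i,s_j]^{-1}$ for a suitable $i<j$; hence $[\Gamma,\Gamma]=\bigoplus_{j\in T}\Z t_j$. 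Consequently $H_1(\Gamma)$ is generated by the classes $\bar s_1,\dots,\bar s_n$, and combining $\bar t_i=2\bar s_i$ (from $s_i^2=t_i$) with $\bar t_j=0$ for $j\in T$ (from $t_j\in[\Gamma,\Gamma]$) shows that the only surviving relations are $2\bar s_j=0$ for $j\in T$; the conjugation relations contribute only the now-redundant $4\bar s_j=0$. Thus
\[
H_1(\Gamma)\cong\Z^{\,n-|T|}\oplus\F_2^{\,|T|}.
\]

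Finally I would identify $n-|T|$ with $b_1$: by the computation recorded after Lemma~\ref{lemma:hi_z_rank}, $b_1=\rk_{\Z}(\Z^n)^{C_2^n}$, and the holonomy action is diagonal in the basis $t_1,\dots,t_n$, with $t_i$ fixed by the whole group precisely when $i\notin T$; hence $(\Z^n)^{C_2^n}=\bigoplus_{i\notin T}\Z t_i$ has rank $n-|T|$, so $b_1=n-|T|$ and $|T|=n-b_1$, yielding $H_1(\Gamma)\cong\F_2^{\,n-b_1}\oplus\Z^{\,b_1}$. The one delicate point is the presentation step: one must pin down the commutators $[s_i,s_j]$ on the nose --- not merely modulo squares of the $t_j$ --- since it is precisely this that forces the torsion of $H_1(\Gamma)$ to have exponent $2$ rather than being built from copies of $\Z/4$.
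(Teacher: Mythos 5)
Your proposal is correct and follows essentially the same route as the paper: both compute the abelianization from the commutator relations $[s_i,s_j]=s_j^{-2a_{ij}}$ (which the paper cites from Kamishima--Masuda and you rederive from the explicit affine generators of Remark~\ref{remark:generators}), conclude that the torsion is $\F_2$ on each nonzero column of $A$, and identify the free rank with $b_1=\rk_{\Z}(\Z^n)^{C_2^n}$ via the diagonal holonomy action. Your closing remark about pinning down the commutators exactly (not merely modulo squares) is precisely the point the paper's citation is carrying.
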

\begin{proof}
$H_1(\Gamma)$ is the abelianization of $\Gamma$. By \cite[Lemma 3.2]{KM09} the generators $s_1,\ldots,s_n$ can be taken in such a way, that for every $1 \leq i < j \leq n$ we have
\[
[s_j, s_i] = 
\left\{
\begin{array}{ll}
s_j^{-2} & \text{ if } a_{ij} = 1\\
1 & \text{ if } a_{ij} = 0
\end{array}
\right.
\]
Hence
\[
\langle s_j[\Gamma, \Gamma] \rangle = \left\{
\begin{array}{ll}
\Z & \text{ if } A^{(j)} = 0\\
\F_2 & \text{ if } A^{(j)} \neq 0
\end{array}
\right.
\]
where $A^{(j)}$ is the $j$-th column of $A$, for every $1 \leq j \leq n$. Since in the short exact sequence \eqref{eq:ses} $\Z^n$ is generated by the squares of the generators of $\Gamma$, we easily get that zero columns of $A$ correspond to trivial constituents of the action of $C_2^n$ on $\Z^n$, denoted by $\varrho$ in \eqref{eq:conjugation}. By definition, the action of $\img(\varrho)$ on $\Z^n$ is isomorphic over the rationals to the holonomy representation. Hence, assuming that $\Gamma$ fits into the short exact sequence \eqref{eq:bieberbach}, we have
\[
b_1 = \rk_{\Z}L^G = \rk_{\Z}(\Z^n)^{\img\varrho} = \rk_{\Z}(\Z^n)^{C_2^n}.
\]
\end{proof}

From the above proof, we immediately get

\begin{corollary} 
\label{corollary:b1}
$H^1(\Gamma, \Z) = \Z^{b_1}$. Moreover, $b_1$ is the number of zero columns of $A$.
\end{corollary}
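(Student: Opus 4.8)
The plan is to deduce both assertions directly from Lemma~\ref{lemma:h1_gamma} and the universal coefficient theorem, so there is essentially no new work. First I would take $i=1$, $K=\Z$ in \eqref{eq:universal-coeff-thm}: since $M$ is connected, $H_0(\Gamma)=\Z$ is free, hence $\Ext^1_\Z(H_0(\Gamma),\Z)=0$ and $H^1(\Gamma,\Z)\cong\Hom_\Z(H_1(\Gamma),\Z)$ (this is exactly the identification recorded in the last Remark). Substituting $H_1(\Gamma)=\F_2^{\,n-b_1}\oplus\Z^{b_1}$ from Lemma~\ref{lemma:h1_gamma} and using $\Hom_\Z(\F_2,\Z)=0$ together with $\Hom_\Z(\Z,\Z)=\Z$ yields $H^1(\Gamma,\Z)\cong\Z^{b_1}$, which is the first claim.

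For the second claim I would revisit the abelianization computation carried out inside the proof of Lemma~\ref{lemma:h1_gamma}. There it is shown, using \cite[Lemma 3.2]{KM09} and the fact that the subgroup $\Z^n\le\Gamma$ of \eqref{eq:ses} is generated by $s_1^2,\dots,s_n^2$, that $H_1(\Gamma)=\Gamma/[\Gamma,\Gamma]$ splits along the images of the generators as $\bigoplus_{j=1}^n\langle s_j[\Gamma,\Gamma]\rangle$, where the $j$-th summand is $\Z$ precisely when the $j$-th column $A^{(j)}$ of $A$ is zero and $\F_2$ otherwise. Comparing with Lemma~\ref{lemma:h1_gamma}, the number of free summands is $b_1$, so $b_1$ equals the number of zero columns of $A$.

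The only step that is not purely formal is knowing that this direct-sum decomposition really places the $\Z$-summands in bijection with the zero columns (i.e.\ that no additional relations merge or collapse the free parts), but that is precisely what the proof of Lemma~\ref{lemma:h1_gamma} already establishes. Consequently I do not expect any genuine obstacle: the corollary is a bookkeeping consequence of the preceding lemma, and the write-up can be kept to a couple of lines.
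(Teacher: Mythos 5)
Your proposal is correct and follows essentially the same route as the paper, which simply states that the corollary follows immediately from the proof of Lemma~\ref{lemma:h1_gamma}; you are just making explicit the universal-coefficient step $H^1(\Gamma,\Z)\cong\Hom_\Z(H_1(\Gamma),\Z)=\Z^{b_1}$ and the identification of the $\Z$-summands with the zero columns of $A$ already established there. No issues.
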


\begin{corollary}
\label{corollary:h2_gamma_z}
$H^2(\Gamma, \Z) = \F_2^{n-b_1}\oplus\Z^{b_2}$. Moreover, the second Betti number $b_2$ is equal to the number of pairs of equal columns of $A$:
\begin{equation}
\label{eq:b2}
b_2 = \#\left\{(i,j): 1 \leq i < j \leq n \text{ and } A^{(i)} = A^{(j)}\right\}.
\end{equation}
\end{corollary}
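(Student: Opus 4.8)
The plan is to read off $H^2(\Gamma,\Z)$ directly from the universal coefficient theorem and then identify the two Betti-number-sized pieces. Applying \eqref{eq:universal-coeff-thm} with $K=\Z$ and $i=2$ gives
\[
H^2(\Gamma, \Z) \cong \Hom_{\Z}(H_2(\Gamma), \Z) \oplus \Ext^1_{\Z}(H_1(\Gamma), \Z).
\]
The free summand $\Hom_{\Z}(H_2(\Gamma),\Z)$ has rank $\rk_{\Z}H_2(\Gamma) = b_2$ by the Remark following Lemma~\ref{lemma:hi_z_rank}, hence equals $\Z^{b_2}$. For the torsion summand I would invoke Lemma~\ref{lemma:h1_gamma}: since $\Ext^1_{\Z}$ annihilates the free part $\Z^{b_1}$ of $H_1(\Gamma)$ and $\Ext^1_{\Z}(\F_2,\Z)\cong\F_2$, we get $\Ext^1_{\Z}(H_1(\Gamma),\Z)\cong\F_2^{n-b_1}$. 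This settles the first assertion.

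To establish formula \eqref{eq:b2}, I would use Lemma~\ref{lemma:hi_z_rank}, which reduces the computation of $b_2$ to that of $\dim_{\Q}\big(\Alt^2(L^* \otimes_{\Z} \Q)\big)^G$. The first step is to make the holonomy representation explicit. By Remark~\ref{remark:generators} and \eqref{eq:conjugation}, the holonomy group $G$ is $\img\varrho$, and in the standard basis $e_1,\dots,e_n$ of $L=\Z^n$ the generator $c_i$ acts diagonally by $c_i\cdot e_j = (-1)^{a_{ij}}e_j$, where we set $a_{ij}=0$ for $i\geq j$ (in particular the $i$-th coordinate is a trivial constituent for $c_i$, consistent with the identification of zero columns of $A$ with trivial constituents in the proof of Lemma~\ref{lemma:h1_gamma}). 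Dualizing and extending scalars, the $e_j^*$ transform by the same signs, so on the $\Q$-basis $\{e_i^*\wedge e_j^* : 1\le i<j\le n\}$ of $\Alt^2(L^*\otimes\Q)$ the element $c_k$ acts by the scalar $(-1)^{a_{ki}+a_{kj}}$.

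Since $G$ is abelian and acts diagonally in this basis, $\Alt^2(L^*\otimes\Q)$ decomposes as the direct sum of the one-dimensional subrepresentations $\Q\,e_i^*\wedge e_j^*$, so its invariant subspace is spanned by exactly those $e_i^*\wedge e_j^*$ that are fixed by every $c_k$. The vector $e_i^*\wedge e_j^*$ is fixed iff $a_{ki}+a_{kj}\equiv 0\pmod 2$ for all $k$, i.e. iff $a_{ki}=a_{kj}$ for all $k$, which is precisely the condition $A^{(i)}=A^{(j)}$. Counting such pairs yields $b_2 = \#\{(i,j): 1\le i<j\le n,\ A^{(i)}=A^{(j)}\}$, as claimed.

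The dimension bookkeeping is routine; the only point that needs care is pinning down the holonomy action precisely — in particular that the linear part of $s_i$ carries a $+1$ in the $i$-th slot (the $\tfrac12$ living in the translation part), which is where Remark~\ref{remark:generators} is essential — together with the observation that for an abelian group acting by a diagonal $(\pm 1)$-representation the invariant subspace is simply the span of the invariant basis vectors.
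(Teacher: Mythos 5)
Your proposal is correct and follows essentially the same route as the paper: the universal coefficient theorem splits $H^2(\Gamma,\Z)$ into $\Z^{b_2}\oplus\Ext^1_{\Z}(H_1(\Gamma),\Z)$, Lemma~\ref{lemma:h1_gamma} identifies the torsion part as $\F_2^{n-b_1}$, and Lemma~\ref{lemma:hi_z_rank} together with the explicit diagonal action $\varrho(c_i)(e_j)=(-1)^{a_{ij}}e_j$ shows that the invariant wedges $e_i\wedge e_j$ correspond exactly to pairs of equal columns of $A$. You spell out the invariance condition $a_{ki}=a_{kj}$ for all $k$ a bit more explicitly than the paper does, but the argument is the same.
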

\begin{proof}
Let $e_1,\ldots,e_n$ be the basis of $\Z^n$, where $e_j := s_j^2$ for $1 \leq j \leq n$. If one treats $A$ as an integer matrix, as in Remark \ref{remark:generators}, then
\[
\forall_{1 \leq i,j \leq n} \varrho(c_i)(e_j) = (-1)^{a_{ij}} e_j.
\]
Since the action of $C_2^n$ is diagonal,  modules $\Z^n$ and $(\Z^n)^*$ are isomorphic and moreover, a basis of $\left(\Alt^2 \Z^n\right)^{C_2^n}$ is the set
\[
\left\{ e_i \wedge e_j : i < j \text{ and } A^{(i)} = A^{(j)} \right\}.
\]
Hence, by Lemma \ref{lemma:hi_z_rank}, we get equality \eqref{eq:b2}. By Lemma \ref{lemma:h1_gamma}, $\Ext_{\Z}^1(H_1(\Gamma),\Z)$ is an elementary abelian $2$-group of rank $n-b_1$, which ends the proof.
\end{proof}

\begin{lemma}
\label{lemma:dimension-of-img-rho}
$\dim_{\F_2} \img \rho^{(2)} = n-b_1 + b_2$.
\end{lemma}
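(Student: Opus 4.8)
The plan is to extract $\img\rho^{(2)}$ directly from the vertical exact sequence in Diagram \ref{diagram:bockstein}. This sequence is the piece
\[
H^2(\Gamma,\Z) \xrightarrow{\ \iota^{(2)}\ } H^2(\Gamma,\Z) \xrightarrow{\ \rho^{(2)}\ } H^2(\Gamma,\F_2)
\]
of the long exact cohomology sequence attached to $0 \to \Z \xrightarrow{\iota} \Z \xrightarrow{\rho} \F_2 \to 0$, so exactness gives $\ker\rho^{(2)} = \img\iota^{(2)}$. Since $\img\rho^{(2)}$ is a subgroup of the $\F_2$-vector space $H^2(\Gamma,\F_2)$, it is itself an $\F_2$-vector space, and the first isomorphism theorem yields $\img\rho^{(2)} \cong H^2(\Gamma,\Z)/\img\iota^{(2)}$ as abelian groups; in particular $\dim_{\F_2}\img\rho^{(2)} = \dim_{\F_2}\bigl(H^2(\Gamma,\Z)/\img\iota^{(2)}\bigr)$.

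Next I would substitute the structure of $H^2(\Gamma,\Z)$ from Corollary \ref{corollary:h2_gamma_z}, namely $H^2(\Gamma,\Z) \cong \F_2^{\,n-b_1} \oplus \Z^{b_2}$. Recalling from the text after \eqref{eq:bockstein4} that $\iota$ is multiplication by $2$, the map $\iota^{(2)}$ is multiplication by $2$ on $H^2(\Gamma,\Z)$; being a group endomorphism it respects the displayed decomposition, annihilating the torsion summand $\F_2^{\,n-b_1}$ and acting injectively on $\Z^{b_2}$ with image $2\Z^{b_2}$. Hence
\[
H^2(\Gamma,\Z)/\img\iota^{(2)} \;\cong\; \F_2^{\,n-b_1} \oplus \bigl(\Z/2\Z\bigr)^{b_2} \;\cong\; \F_2^{\,n-b_1+b_2},
\]
which gives $\dim_{\F_2}\img\rho^{(2)} = n - b_1 + b_2$, as claimed.

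There is no real obstacle here: the statement is a bookkeeping consequence of exactness of the Bockstein sequence together with the explicit abelian group structure of $H^2(\Gamma,\Z)$ already established in Corollary \ref{corollary:h2_gamma_z}. The only point deserving a word of care is that $\rho^{(2)}$ need not be injective, so one cannot simply compare $\dim_{\F_2}H^2(\Gamma,\F_2)$ with anything; instead one works with the cokernel of $\iota^{(2)}$, and the computation of that cokernel is immediate once the torsion and free parts of $H^2(\Gamma,\Z)$ are separated. Everything else is direct substitution.
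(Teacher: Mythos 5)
Your proof is correct and follows essentially the same route as the paper: exactness of the Bockstein sequence gives $\ker\rho^{(2)}=\img\iota^{(2)}$, and the cokernel of multiplication by $2$ on $H^2(\Gamma,\Z)\cong\F_2^{\,n-b_1}\oplus\Z^{b_2}$ is $\F_2^{\,n-b_1+b_2}$. Your extra remark that multiplication by $2$ kills the torsion summand just makes explicit a step the paper leaves implicit.
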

\begin{proof}
By the isomorphism theorem and using exactness of the column in Diagram \ref{diagram:bockstein}, we have 
\[
\img \rho^{(2)} \cong H^2(\Gamma,\Z)/\ker \rho^{(2)} = H^2(\Gamma,\Z)/\img \iota^{(2)}.
\]
Since $\iota^{(2)}$ is induced by the multiplication by $2$, using Corollary \ref{corollary:h2_gamma_z} we get that $\img\iota^{(2)} = 2\Z^{b_2}$,
\[
H^2(\Gamma,\Z)/\img \iota^{(2)} \cong \F_2^{n-b_1}\oplus\Z^{b_2}/2\Z^{b_2} \cong \F_2^{n-b_1+b_2},
\]
and the result follows.
\end{proof}

\subsection{Cohomology over \texorpdfstring{$\F_2$}{F2}}

By Lemma \ref{lemma:h1_gamma}, we easily get that $H^1(\Gamma, \F_2) = \F_2^n$. It is desirable, however, to describe this group in more detail, since it generates the cohomology ring of $\Gamma$.

Assume that $H^1(C_2^n, \F_2) = \Hom(C_2^n, \F_2)$ is generated by $c_1^*, \ldots, c_n^*$, which are in a way elements dual to the generators of $C_2^n$, i.e. $c_i^*(c_j) = \delta_{ij}$, where $\delta_{ij}$ is the Kronecker delta, for $1 \leq i,j \leq n$. Define
\[
x_i := \pi_*(c_i^*) \in H^1(\Gamma, \F_2)
\]
for $1 \leq i \leq n$. Since generators $s_1,\ldots,s_n$ satisfy \eqref{eq:generators_of_gamma}, we have
\begin{equation}
\label{eq:h1_on_generators}
\forall_{1 \leq i,j \leq n} x_i(s_j) = \delta_{ij}.
\end{equation}

\begin{lemma}[{\cite[Lemma 2.1]{KM09}}]
\[
H^*(\Gamma, \F_2) = \langle x_1,\ldots,x_n \rangle \cong \F_2[x_1,\ldots,x_n]/I_A,
\]
where $I_A$ is the ideal generated by the polynomials
\begin{equation*}
\theta_j := x_j^2 + x_j\sum_{i=1}^{j-1}a_{ij}x_i,
\end{equation*}
for $1 \leq j \leq n$.
\end{lemma}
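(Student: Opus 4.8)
The plan is to compute $H^*(\Gamma,\F_2)$ via the Lyndon--Hochschild--Serre spectral sequence of the extension \eqref{eq:ses} together with a Koszul-complex analysis of its $E_2$-page. Since $\varrho(C_2^n)$ consists of diagonal $\pm1$-matrices, $C_2^n$ acts trivially on $\Z^n/2\Z^n$, hence trivially on $H^*(\Z^n,\F_2)=\Lambda_{\F_2}(\xi_1,\dots,\xi_n)$, where $\xi_j\in H^1(\Z^n,\F_2)$ is determined by $\xi_j(s_k^2)=\delta_{jk}$. Thus the $E_2$-page is the bigraded tensor product
\[
E_2^{*,*}=H^*(C_2^n,\F_2)\otimes_{\F_2}\Lambda_{\F_2}(\xi_1,\dots,\xi_n)=\F_2[c_1^*,\dots,c_n^*]\otimes\Lambda(\xi_1,\dots,\xi_n),
\]
with $c_i^*$ in bidegree $(1,0)$, $\xi_j$ in bidegree $(0,1)$, and $d_2$ the derivation with $d_2(c_i^*)=0$, $d_2(\xi_j)=\tau_j$, where $\tau_j\in H^2(C_2^n,\F_2)$ is the transgression of $\xi_j$.

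The heart of the matter is to identify the $\tau_j$. By the standard identification of the transgression with the push-forward of the extension class, $\tau_j$ classifies the central extension
\[
1\longrightarrow\F_2\longrightarrow\Gamma\big/\big\langle s_j^4,\ s_i^2:i\ne j\big\rangle\longrightarrow C_2^n\longrightarrow 1,
\]
which is central because the conjugation action on $\Z^n$ is trivial modulo $2$. Reading off the relevant data from the presentation of $\Gamma$ used in the proof of Lemma \ref{lemma:h1_gamma} — in this quotient the lift of $c_j$ squares to the nontrivial central element, the lifts of $c_i$ with $i\ne j$ square to $1$, and the only nontrivial commutators are $[\bar s_i,\bar s_j]=(\bar s_j^2)^{a_{ij}}$ for $i<j$ — and inserting it into the bijection between central $\F_2$-extensions of $C_2^n$ and $H^2(C_2^n,\F_2)$, whose $\F_2$-basis is $\{(c_i^*)^2\}_i\cup\{c_i^*c_j^*\}_{i<j}$, yields
\[
\tau_j=(c_j^*)^2+\sum_{i<j}a_{ij}\,c_i^*c_j^*.
\]
I expect this identification of the transgressions to be the main obstacle; the remaining steps are formal.

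Granting this, put $x_i:=\pi_*(c_i^*)$, so that $\tau_j$ is exactly $\theta_j$ with each $x_i$ replaced by $c_i^*$. The only common zero of $\theta_1,\dots,\theta_n$ over $\overline{\F_2}$ is the origin ($\theta_1=x_1^2$ forces $x_1=0$, then $\theta_2=x_2^2$ forces $x_2=0$, and so on), so $\theta_1,\dots,\theta_n$ is a homogeneous system of parameters in $\F_2[x_1,\dots,x_n]$, and therefore — that ring being Cohen--Macaulay — a regular sequence. Hence $(E_2^{*,*},d_2)$ is the Koszul complex of $(\tau_1,\dots,\tau_n)$ over $\F_2[c_1^*,\dots,c_n^*]$, which is acyclic off the bottom row; thus $E_3^{p,q}=\bigl(\F_2[c_1^*,\dots,c_n^*]/(\tau_1,\dots,\tau_n)\bigr)_p$ for $q=0$ and $E_3^{p,q}=0$ for $q>0$. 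For degree reasons $d_r=0$ for all $r\ge 3$, so $E_\infty=E_3$ sits in the bottom row, which means the inflation (edge) homomorphism $\pi_*\colon H^*(C_2^n,\F_2)\to H^*(\Gamma,\F_2)$ is surjective with kernel $(\tau_1,\dots,\tau_n)$; under $c_i^*\leftrightarrow x_i$ this gives $H^*(\Gamma,\F_2)=\langle x_1,\dots,x_n\rangle\cong\F_2[x_1,\dots,x_n]/I_A$. A more geometric alternative is to induct up the real Bott tower $M=M_n\to M_{n-1}\to\dots\to M_0=\mathrm{pt}$, applying the projective bundle formula over $\F_2$ to each $\mathbb{RP}^1$-bundle $M_j=\mathbb{P}(\underline{\R}\oplus\gamma_j)\to M_{j-1}$ with $w_1(\gamma_j)=\sum_{i<j}a_{ij}x_i$; the extra work there is to match the Stiefel--Whitney generators so produced with the classes $\pi_*(c_i^*)$.
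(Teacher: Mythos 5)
Your proposal is correct, but it should be said up front that the paper does not prove this lemma at all: it is quoted verbatim from \cite[Lemma 2.1]{KM09}, where the proof goes by induction up the Bott tower $M_n\to M_{n-1}\to\cdots\to\mathrm{pt}$, using the cohomology of each $\R P^1$-bundle (essentially the Leray--Hirsch/projective-bundle argument you sketch as your ``geometric alternative''). Your primary argument is a genuinely different, purely group-cohomological route, and it holds together: the action of $C_2^n$ on $H^*(\Z^n,\F_2)$ is indeed trivial because $\varrho$ is diagonal with $\pm1$ entries; the transgression $d_2(\xi_j)$ is the push-forward of the extension class of \eqref{eq:ses} along $\xi_j$, and your computation of the resulting central extension (only $\bar s_j$ squares nontrivially, and $[\bar s_j,\bar s_i]=(\bar s_j^2)^{a_{ij}}$ survives while all other commutators die because $\bar s_k^2=1$ for $k\neq j$) correctly yields $\tau_j=(c_j^*)^2+\sum_{i<j}a_{ij}c_i^*c_j^*$; the $\theta_j$ form a homogeneous system of parameters (the inductive vanishing argument is right) and hence a regular sequence in the Cohen--Macaulay ring $\F_2[x_1,\dots,x_n]$, so $(E_2,d_2)$ is an acyclic-in-positive-degrees Koszul complex, the spectral sequence collapses onto the bottom row at $E_3$, and inflation is surjective with kernel exactly $(\tau_1,\dots,\tau_n)$. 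What your approach buys is independence from the manifold picture (it only uses the presentation of $\Gamma$ from \cite[Lemma 3.2]{KM09}) and it generalizes to other diagonal-type Bieberbach groups whose transgressions form a regular sequence; what it costs is reliance on two standard but nontrivial identifications (transgression $=$ push-forward of the extension class, and the dictionary between $H^2(C_2^n,\F_2)$ and squares/commutators of lifts), which you correctly flag as the crux and which are harmless over $\F_2$ where no sign or normalization issues arise.
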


\begin{remark}
\label{remark:relations}
In what follows, we will treat $H^*(\Gamma, \F_2)$ not as a quotient, but rather as a ring which satisfies relations given by the generators of the ideal $I_A$.
\end{remark}

For every $1 \leq j \leq n$, define
\begin{equation}
\label{eq:alpha_j}
\alpha_j := \sum_{i=1}^{j-1}a_{ij}x_i = \sum_{i=1}^{n}a_{ij}x_i.
\end{equation}
Using the above remark, we can write the relations of $H^*(\Gamma, \F_2)$ as
\begin{equation}
\label{eq:squares_relations}
x_1^2 = \alpha_1x_1, x_2^2 = \alpha_2x_2, \ldots, x_n^2 = \alpha_nx_n.
\end{equation}
\begin{corollary}
\label{corollary:squares}
$x_j^2 = 0$ if and only if $A^{(j)} = 0$, for every $1 \leq j \leq n$.
\end{corollary}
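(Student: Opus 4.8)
The plan is to read off both implications directly from the presentation $H^*(\Gamma,\F_2)\cong\F_2[x_1,\ldots,x_n]/I_A$ together with the relations \eqref{eq:squares_relations}.

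For the ``if'' direction, suppose $A^{(j)}=0$, i.e.\ $a_{ij}=0$ for all $i$. Then $\alpha_j=0$ by \eqref{eq:alpha_j}, and the $j$-th relation in \eqref{eq:squares_relations} gives $x_j^2=\alpha_jx_j=0$ at once.

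For the ``only if'' direction I would argue the contrapositive: assuming $A^{(j)}\neq 0$ I want to show $x_j^2\neq 0$, which by \eqref{eq:squares_relations} amounts to $\alpha_jx_j=\sum_{i=1}^{j-1}a_{ij}x_ix_j\neq 0$ in $H^2(\Gamma,\F_2)$. The key point is that the ideal $I_A$ is homogeneous, generated by the degree-$2$ polynomials $\theta_k=x_k^2+\alpha_kx_k$ for $1\le k\le n$; hence its degree-$2$ component $(I_A)_2$ is exactly the $\F_2$-span of $\theta_1,\ldots,\theta_n$. Now $\alpha_jx_j$ is a \emph{squarefree} polynomial of degree $2$, so if it belonged to $(I_A)_2$ we could write $\alpha_jx_j=\sum_{k=1}^{n}c_k\theta_k$ with $c_k\in\F_2$; comparing the coefficients of the ``pure squares'' $x_k^2$ — each of which occurs in $\theta_k$ and in no other $\theta_{k'}$ — forces every $c_k=0$, whence $\alpha_jx_j=0$ as a polynomial and $a_{ij}=0$ for all $i<j$, contradicting $A^{(j)}\neq 0$. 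Therefore $\alpha_jx_j\neq 0$ and $x_j^2\neq 0$.

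I do not expect a genuine obstacle here; the only point needing a line of care is the claim $(I_A)_2=\langle\theta_1,\ldots,\theta_n\rangle$ together with the linear independence of the $\theta_k$, both of which hold because multiplying a degree-$2$ generator into degree $2$ permits only scalar multiples and $\theta_k$ is the unique generator involving $x_k^2$. Alternatively, one could invoke the well-known fact that the squarefree monomials in $x_1,\ldots,x_n$ form an $\F_2$-basis of $H^*(\Gamma,\F_2)$, coming from the iterated $\R P^1$-bundle structure of $M(A)$, and then simply note that $\alpha_jx_j$ is a nontrivial $\F_2$-combination of the distinct basis elements $x_ix_j$.
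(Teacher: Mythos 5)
Your proof is correct and takes essentially the same route the paper does implicitly: the corollary is read off from the relation $x_j^2=\alpha_jx_j$ of \eqref{eq:squares_relations} once one knows the squarefree monomials $x_ix_j$ ($i<j$) are linearly independent in $H^2(\Gamma,\F_2)$ --- a fact the paper records later as $\mathcal B_2$ in \eqref{eq:basis_of_ring} being a basis, citing \cite{G17}, and which your homogeneous-ideal computation (comparing coefficients of the pure squares $x_k^2$ in $\sum_k c_k\theta_k$) verifies directly. No gaps.
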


We are interested in the calculation of the kernel of $\beta^{(2)}$. First, we provide a formula for this map, using the fact that 
\begin{equation}
\label{eq:basis_of_ring}
\mathcal B_2 := \{x_ix_j : 1 \leq i < j \leq n\}
\end{equation}
is a basis of $H^2(\Gamma, \F_2)$ (see \cite[proof of Lemma 2.1]{G17}).

\begin{lemma}
\label{lemma:bockstein2}
Let $1 \leq i,j \leq n$. Then
\[
\beta^{(2)}(x_ix_j) = x_i^2x_j + x_ix_j^2.
\]
\end{lemma}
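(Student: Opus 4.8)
The plan is to compute $\beta^{(2)}$ on the basis $\mathcal B_2 = \{x_ix_j : i<j\}$ using the fact that the Bockstein $\beta^{(2)}$ associated to the extension \eqref{eq:bockstein4} is, on $H^*(\Gamma,\F_2)$, the mod-$2$ Bockstein $Sq^1$, which is a derivation satisfying $\beta^{(1)}(x) = x^2$ on degree-one classes. Concretely, I would first recall that $\beta = \bigoplus_i \beta^{(i)}$ is a graded derivation of degree $1$ on the ring $H^*(\Gamma,\F_2)$, so $\beta(uv) = \beta(u)v + u\beta(v)$ for homogeneous $u,v$; and that for $x \in H^1(\Gamma,\F_2)$ one has $\beta^{(1)}(x) = x^2$ (this is the standard identity $Sq^1 = $ squaring in degree one, and it can also be checked directly from the extension \eqref{eq:bockstein4} on $1$-cocycles). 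Then for a product $x_ix_j$ with $i < j$ the derivation property gives immediately
\[
\beta^{(2)}(x_ix_j) = \beta^{(1)}(x_i)\,x_j + x_i\,\beta^{(1)}(x_j) = x_i^2 x_j + x_i x_j^2,
\]
which is exactly the claimed formula; and since the formula $x_i^2x_j + x_ix_j^2$ is visibly symmetric in $i$ and $j$ and vanishes when $i = j$, it holds for all $1 \le i,j \le n$, not just $i<j$.

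The two ingredients I must justify are (a) that $\beta$ is a derivation and (b) that $\beta^{(1)}(x) = x^2$ for degree-one $x$. For (a), the Bockstein of the coefficient sequence $0 \to \F_2 \to \Z/4 \to \F_2 \to 0$ is a stable cohomology operation, hence compatible with cup products up to the usual Leibniz/Cartan sign — and over $\F_2$ all signs are trivial, so $\beta(uv) = \beta(u)v + u\beta(v)$; this is in any standard reference (e.g. Hatcher, Chapter 3.E, which the paper already cites for Diagram \ref{diagram:bockstein}). For (b), one computes with cochains: if $x$ is represented by a $1$-cocycle $\bar x \colon \Gamma \to \F_2$, lift it to a $\Z/4$-valued cochain $\tilde x$, and $\beta^{(1)}(x)$ is represented by $\tfrac12 \delta\tilde x \bmod 2$; a direct evaluation on a pair $(\gamma_1,\gamma_2)$ shows this equals $\bar x(\gamma_1)\bar x(\gamma_2)$, i.e. the cup square $x^2$. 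Alternatively, one can invoke that the integral Bockstein $\tilde\beta^{(1)}$ followed by reduction mod $2$ is $\beta^{(1)}$, and that $\tilde\beta^{(1)}(x)$ reduces to $x^2$ because $x \smile x$ is the obstruction to lifting $x$ to $\Z$-coefficients — but the cochain computation is the cleanest.

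I expect the main (minor) obstacle to be nailing down precisely which convention makes $\beta^{(1)}(x) = x^2$ hold on the nose as opposed to up to a sign or a factor, but since everything is over $\F_2$ this evaporates; the only real care needed is to make sure the Bockstein in Diagram \ref{diagram:bockstein} associated to \eqref{eq:bockstein4} is the one whose restriction to degree one is squaring (it is — this is the $\F_2$-Bockstein $Sq^1$), rather than some rescaled variant. Once that is fixed, the derivation identity does all the work and the proof is a two-line application, with the extension to arbitrary $i,j$ following from the manifest symmetry and the vanishing $x_i^2x_i + x_ix_i^2 = 0$ of the right-hand side.
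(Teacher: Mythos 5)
Your proof is correct, but it takes a genuinely different route from the paper. You identify the Bockstein of \eqref{eq:bockstein4} with $Sq^1$ and then use two standard structural facts: the Leibniz (derivation) property $\beta(uv)=\beta(u)v+u\beta(v)$ over $\F_2$, and the identity $\beta^{(1)}(x)=x^2$ for degree-one classes. Since $M$ is a $K(\Gamma,1)$, importing these facts from topology is legitimate, and they reduce the lemma to a one-line computation; the extension to $i=j$ is also consistent, since the derivation property gives $\beta(x_i^2)=2x_i^3=0$, matching the vanishing of the right-hand side. The paper instead works entirely at the cochain level on the bar resolution: it lifts $x_ix_j$ to a $\Z/4$-valued cochain, computes $\delta^2$ of the lift divided by $2$, and runs a $16$-case analysis on the values $x_i(a),x_i(b),x_j(b),x_j(c)$ to determine exactly when the resulting $3$-cocycle is nonzero, concluding that it represents $x_i^2x_j+x_ix_j^2$. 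Your argument is shorter and more conceptual but outsources the two key identities to the general theory of Steenrod operations (and requires one to verify that the Bockstein of \eqref{eq:bockstein4} really is $Sq^1$ and not the integral Bockstein $\tilde\beta$, which is the point you correctly flag); the paper's computation is longer but self-contained and makes no appeal to the Cartan formula. Both are valid proofs of the same statement.
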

\begin{proof}
Assume, that $1 \leq i,j \leq n$.
Note that in the calculation of the Bockstein homomorphism, we use short exact sequence \eqref{eq:bockstein4}. Define two maps:
\begin{enumerate}
\item $\overline{\phantom{0}} \colon \F_2 \to \Z/4$ -- the lift, given by $\overline 0 = 0, \overline 1 = 1$.
\item $/2 \colon 2\Z/4 \to \F_2$ -- a division, given by $2/2 = 1, 0/2 = 0$.
\end{enumerate}
For the simplicity of calculations, we abuse the notation and identify a cohomology class with a specific cocycle which defines it. With this in hand, $\beta^{(2)}$ is given by
\begin{equation}
\label{eq:beta2}
\begin{split}
\beta^{(2)}(x_ix_j)(a,b,c) &= \delta^2(\overline{x_ix_j})(a,b,c)/2\\
&= \bigl(\overline{x_ix_j}(b,c) - \overline{x_ix_j}(ab,c) + \overline{x_ix_j}(a,bc) - \overline{x_ix_j}(a,b)\bigr)/2
\end{split}
\end{equation}
for $a,b,c \in \{s_1, \ldots, s_n\}$, since polynomials in $H^*(\Gamma, \F_2)$ are recognized by their values on the generators of $\Gamma$, as in \eqref{eq:h1_on_generators}. We have
\[
\overline{x_ix_j}(ab,c) = \overline{x_i(ab)} \cdot \overline{x_j(c)} = \left(\overline{x_i(a)+x_i(b)}\right) \cdot \overline{x_j(c)}.
\]
Using similar calculations for the rest of the summands in \eqref{eq:beta2} we get, that its value depends only on $x_i(a), x_i(b), x_j(b), x_j(c)$. Considering all $16$ cases gives us, that
\[
\beta^{(2)}(x_ix_j)(a,b,c) = 1 \Leftrightarrow \bigl(x_i(a) = x_j(c) = 1 \;\wedge\; x_i(b) \neq x_j(b)\bigr).
\]
This occurs exactly when $i \neq j, a=s_i, c=s_j$ and $b \in \{s_i,s_j\}$. Since, in addition, $\beta^{(2)}(x_j^2) = 0$, hence we get the general formula
\[
\beta^{(2)}(x_ix_j) = x_i^2x_j + x_ix_j^2.
\]
\end{proof}

\begin{corollary}
\label{corollary:squares_in_kernel}
Let $S := \vspan\{ x_j^2 : 1 \leq j \leq n\}$. Then $\dim S = n-b_1$ and $S \subset \ker \beta^{(2)}$.
\end{corollary}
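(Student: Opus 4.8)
The plan is to treat the two assertions of the corollary separately, dealing with the dimension count first and the kernel containment second.

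For the equality $\dim S = n-b_1$: by Corollary~\ref{corollary:squares}, $x_j^2 = 0$ precisely when the $j$-th column $A^{(j)}$ vanishes, and by Corollary~\ref{corollary:b1} there are exactly $b_1$ zero columns of $A$; hence exactly $n-b_1$ of the classes $x_1^2,\ldots,x_n^2$ are nonzero. It then remains to see that these nonzero classes are linearly independent over $\F_2$. Using \eqref{eq:squares_relations} together with \eqref{eq:alpha_j}, I would expand $x_j^2 = \alpha_j x_j = \sum_{i=1}^{j-1} a_{ij}\, x_i x_j$ in terms of the basis $\mathcal B_2$ from \eqref{eq:basis_of_ring}. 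The key observation is that every basis vector occurring in this expansion of $x_j^2$ has $j$ as its larger index, so for $j \neq k$ the expansions of $x_j^2$ and $x_k^2$ involve pairwise disjoint subsets of $\mathcal B_2$. Consequently any collection of the nonzero squares is linearly independent, and $\dim S = n-b_1$ follows.

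For the inclusion $S \subset \ker \beta^{(2)}$: since $\beta^{(2)}$ is $\F_2$-linear, it suffices to check that $\beta^{(2)}(x_j^2) = 0$ for each $j$. This is immediate from Lemma~\ref{lemma:bockstein2} applied with $i = j$, which gives $\beta^{(2)}(x_j^2) = x_j^2 x_j + x_j x_j^2 = 2x_j^3 = 0$ in $H^3(\Gamma,\F_2)$; in fact the vanishing $\beta^{(2)}(x_j^2) = 0$ is already recorded in the course of the proof of that lemma.

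The argument is essentially routine, and I do not anticipate a genuine obstacle. The one point requiring a moment's care is the linear independence of the nonzero squares $x_j^2$; the disjoint-support remark above is exactly what handles it, so I would single that out as the crux of the proof.
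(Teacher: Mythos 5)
Your proposal is correct and follows essentially the same route as the paper: the kernel containment via Lemma \ref{lemma:bockstein2} with $i=j$, and the dimension count via Corollaries \ref{corollary:b1} and \ref{corollary:squares} together with linear independence in the basis $\mathcal B_2$. Your disjoint-support observation (that the expansion of $x_j^2$ only involves basis monomials whose larger index is $j$) is exactly the detail the paper leaves implicit when it asserts independence from relations \eqref{eq:squares_relations}.
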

\begin{proof}
$S$ is in the kernel of $\beta^{(2)}$ by Lemma \ref{lemma:bockstein2}. Now, by Corollaries \ref{corollary:b1} and \ref{corollary:squares}, the generating set $\{ x_j^2 : 1 \leq j \leq n\}$ has $n-b_1$ non-zero elements. Since $\mathcal B_2$, defined by \eqref{eq:basis_of_ring}, is a basis of $H^2(\Gamma, \F_2)$, using relations \eqref{eq:squares_relations} we get that these $n-b_1$ non-zero elements are linearly independent.
\end{proof}

\begin{theorem}
\label{theorem:image}
Let
\[
S_1 := \{\alpha_jx_j: 1 \leq j \leq n, A^{(j)} \neq 0\} \text{ and } S_2:=\{x_kx_l : 1 \leq k < l \leq n, A^{(k)} = A^{(l)}\}.
\]
Then $S_1 \cup S_2$ is a basis of $\img\rho^{(2)}$.
\end{theorem}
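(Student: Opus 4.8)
The plan is to prove that $S_1\cup S_2$ is a linearly independent subset of $\img\rho^{(2)}$ whose cardinality is $\dim_{\F_2}\img\rho^{(2)}$; since the latter equals $n-b_1+b_2$ by Lemma~\ref{lemma:dimension-of-img-rho}, this will give the statement. The count is immediate: writing $\alpha_jx_j=x_j^2$ as in \eqref{eq:squares_relations}, Corollaries~\ref{corollary:b1} and~\ref{corollary:squares} show that $S_1$ consists of the $n-b_1$ nonzero squares $x_j^2$, so $|S_1|=n-b_1$, while $|S_2|=b_2$ by \eqref{eq:b2}. For $S_1\subseteq\img\rho^{(2)}$ I would use the identity $\beta^{(1)}(x)=x^2$ for every $x\in H^1(\Gamma,\F_2)$ (standard; it comes out of a one-line cocycle computation with the lift $\overline{\phantom{0}}$, exactly as in the proof of Lemma~\ref{lemma:bockstein2}): then $\alpha_jx_j=x_j^2=\beta^{(1)}(x_j)\in\img\beta^{(1)}$, and the commuting triangle $\beta^{(1)}=\rho^{(2)}\circ\tilde\beta^{(1)}$ of Diagram~\ref{diagram:bockstein} forces $\img\beta^{(1)}\subseteq\img\rho^{(2)}$. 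For linear independence I would expand in the monomial basis $\mathcal B_2$ of \eqref{eq:basis_of_ring} and use the elementary observation that if $i<j$ and $A^{(i)}=A^{(j)}$ then $a_{ij}=0$ (the $i$-th entry of the $i$-th column is $a_{ii}=0$ by strict upper-triangularity, and it must agree with the $i$-th entry $a_{ij}$ of the $j$-th column); consequently each $\alpha_jx_j=\sum_{i<j}a_{ij}x_ix_j$ involves only monomials $x_ix_j$ with $A^{(i)}\neq A^{(j)}$. Thus $\vspan S_1$ lies in the coordinate subspace spanned by those monomials, which is complementary to $\vspan S_2=\vspan\{x_ix_j:i<j,\ A^{(i)}=A^{(j)}\}$; as $S_1$ is independent by Corollary~\ref{corollary:squares_in_kernel} and $S_2$ is a sub-basis, $S_1\cup S_2$ is independent (and $S_1\cap S_2=\emptyset$).

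What remains, and what I expect to be the real obstacle, is the inclusion $S_2\subseteq\img\rho^{(2)}$. A first step is to notice $S_1\cup S_2\subseteq\ker\beta^{(2)}$: indeed $S_1\subseteq S\subseteq\ker\beta^{(2)}$ by Corollary~\ref{corollary:squares_in_kernel}, and if $A^{(k)}=A^{(l)}$ then $\alpha_k=\alpha_l$, so Lemma~\ref{lemma:bockstein2} together with \eqref{eq:squares_relations} gives
\[
\beta^{(2)}(x_kx_l)=x_k^2x_l+x_kx_l^2=(\alpha_k+\alpha_l)\,x_kx_l=0 .
\]
Since $\img\rho^{(2)}=\ker\tilde\beta^{(2)}\subseteq\ker\beta^{(2)}$ (exactness of the vertical line of Diagram~\ref{diagram:bockstein} and $\beta^{(2)}=\rho^{(3)}\circ\tilde\beta^{(2)}$), and $\vspan(S_1\cup S_2)$ and $\img\rho^{(2)}$ both have dimension $n-b_1+b_2$, it is enough to prove $\ker\beta^{(2)}=\img\rho^{(2)}$. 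Chasing the two Bockstein long exact sequences identifies the quotient $\ker\beta^{(2)}/\img\rho^{(2)}$ with $\img\tilde\beta^{(2)}\cap\img\iota^{(3)}$; here $\img\tilde\beta^{(2)}$ consists of $2$-torsion classes and $\img\iota^{(3)}=2\,H^3(\Gamma,\Z)$, so the quotient is trivial exactly when the torsion subgroup of $H^3(\Gamma,\Z)$ --- equivalently, by \eqref{eq:universal-coeff-thm}, the torsion of $H_2(\Gamma)$ --- has exponent $2$, i.e.\ contains no $\Z/4$. Proving this for real Bott manifolds is the crux; I would try to get it by induction along the real Bott tower $M_n\to M_{n-1}\to\cdots\to M_0$, where each stage is a circle bundle with $2$-torsion Euler class, keeping careful track of the resulting short exact sequences in integral cohomology so that no element of order $4$ is created, or else read it off the known combinatorial description of $H^*(M(A);\Z)$.

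A less structural alternative for $S_2\subseteq\img\rho^{(2)}$ is to write down integral preimages directly. When $A^{(k)}=A^{(l)}=0$, Lemma~\ref{lemma:h1_gamma} and Corollary~\ref{corollary:b1} provide integral classes $\tilde x_k,\tilde x_l\in H^1(\Gamma,\Z)$ reducing to $x_k,x_l$, and since reduction mod $2$ is a ring map, $x_kx_l=\rho^{(2)}(\tilde x_k\cup\tilde x_l)$. When $A^{(k)}=A^{(l)}\neq0$ one cannot lift $x_k,x_l$ separately and the preimage must come from a genuine free class in $H^2(\Gamma,\Z)$: here I would pass to the quotient $\Gamma''=\Gamma/\langle s_j^2:j\neq k,l\rangle$, whose translation lattice is the rank-$2$ module $\Z\langle s_k^2,s_l^2\rangle$ on which the holonomy acts by $\pm I$, and use that inflation $H^1(\Gamma'',\F_2)\to H^1(\Gamma,\F_2)$ is an isomorphism (every homomorphism to $\F_2$ kills squares) so that $H^2(\Gamma'',\F_2)\to H^2(\Gamma,\F_2)$ is surjective; one then checks that the evident free integral $2$-class of $\Gamma''$ pulls back to $x_kx_l$ modulo $\vspan S_1$, which is already known to lie in $\img\rho^{(2)}$. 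Either way, once $S_2\subseteq\img\rho^{(2)}$ is established we have $\vspan(S_1\cup S_2)\subseteq\img\rho^{(2)}$ with both of dimension $n-b_1+b_2$, hence equality, and $S_1\cup S_2$ is a basis of $\img\rho^{(2)}$.
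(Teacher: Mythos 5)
Your reductions are sound up to the point you yourself flag as the obstacle, and there the argument stops. The bookkeeping is fine: $|S_1\cup S_2|=(n-b_1)+b_2=\dim_{\F_2}\img\rho^{(2)}$ by Lemma~\ref{lemma:dimension-of-img-rho}; the observation that $A^{(i)}=A^{(j)}$ forces $a_{ij}=a_{ii}=0$ correctly puts $\vspan S_1$ and $\vspan S_2$ in complementary coordinate subspaces of $H^2(\Gamma,\F_2)$, giving independence; $S_1\subseteq\img\beta^{(1)}\subseteq\img\rho^{(2)}$ via $\beta^{(1)}x=x^2$ and $\beta^{(1)}=\rho^{(2)}\circ\tilde\beta^{(1)}$ is valid; and $\vspan(S_1\cup S_2)\subseteq\ker\beta^{(2)}$ follows from Lemma~\ref{lemma:bockstein2}. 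What is never proved is the one remaining inequality that actually carries the theorem: either $S_2\subseteq\img\rho^{(2)}$, or equivalently the upper bound $\dim_{\F_2}\ker\beta^{(2)}\leq n-b_1+b_2$. Your first route correctly identifies $\ker\beta^{(2)}/\img\rho^{(2)}$ with $\img\tilde\beta^{(2)}\cap\img\iota^{(3)}$ and reduces the problem to the absence of order-$4$ elements in the relevant part of $H^3(\Gamma,\Z)$, but then only says you ``would try'' to establish this by induction on the Bott tower or by citing a combinatorial description of $H^*(M(A);\Z)$; neither is carried out, and this is not an off-the-shelf fact (the paper computes torsion only in $H^2(\Gamma,\Z)$). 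Your second route, for $A^{(k)}=A^{(l)}\neq 0$, ends with ``one then checks that the evident free integral $2$-class of $\Gamma''$ pulls back to $x_kx_l$ modulo $\vspan S_1$'' --- the class is not exhibited and the check is not done. So the crux is asserted, not proved.

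For comparison, the paper closes exactly this gap by an elementary computation entirely inside $H^*(\Gamma,\F_2)$, with no integral degree-$3$ cohomology at all: assuming $f\in\ker\beta^{(2)}\setminus\vspan(S_1\cup S_2)$, normalized so that no monomial from $S_2$ occurs, write $f=f_1x_1+\dots+f_lx_l$ with $f_j$ linear in $x_1,\dots,x_{j-1}$ and $f_l\neq 0$; Lemma~\ref{lemma:bockstein2} gives $f_l(f_l+\alpha_l)x_l=0$ for the top index $l$, and since the monomials $x_ix_jx_k$ with $i<j<k$ form a basis of $H^3(\Gamma,\F_2)$, the product $f_l(f_l+\alpha_l)$ must lie in the relation ideal; tracking the top relation $\theta_k$ appearing in it forces $\alpha_k=\alpha_l$, hence $A^{(k)}=A^{(l)}$, contradicting the normalization. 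This yields $\ker\beta^{(2)}=\vspan(S_1\cup S_2)$, and the dimension count identifies both with $\img\rho^{(2)}$ without ever lifting anything to $\Z$ coefficients. Supplying an argument of this kind (or an actual proof of one of your two claimed facts) is what your proposal still needs.
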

\begin{proof}
By Diagram \ref{diagram:bockstein} we have that $\img \rho^{(2)} = \ker \tilde \beta^{(2)} \subset \ker \beta^{(2)}$. By Lemma \ref{lemma:dimension-of-img-rho} it is enough to show that $S_1 \cup S_2$ generates $\ker \beta^{(2)}$.

First note that if for some $k < l$,  $A^{(k)} = A^{(l)}$, then $\alpha_k = \alpha_l$ and
\begin{equation}
\label{eq:xkxl}
\beta^{(2)}(x_kx_l) = x_k^2x_l + x_kx_l^2 = \alpha_k x_kx_l + x_k \alpha_l x_l = 0.
\end{equation}
Together with Corollary \ref{corollary:squares_in_kernel} we get that $\vspan S_1 \cup S_2 \subset \ker \beta^{(2)}$.

Assume that $\vspan S_1 \cup S_2 \neq \ker \beta^{(2)}$. Then there exists an element 
\[
f = f_1x_1 + \ldots + f_lx_l \in \ker \beta^{(2)},
\]
such that $f_i \in \F_2[x_1,\ldots,x_{i-1}]\setminus\{\alpha_i\}$ for $1 \leq i \leq l$ and $f_l \neq 0$. Let
\[
f_l = \sum_{k=1}^{l-1} f_{kl} x_k,
\]
where $f_{1l},\ldots,f_{l-1,l} \in \F_2$. Using \eqref{eq:xkxl}, we can assume that 
\begin{equation}
\label{eq:kl0}
\forall_{k < l} A^{(k)} = A^{(l)} \Rightarrow f_{kl} = 0.
\end{equation}
We have
\[
\beta^{(2)}(f) = f_1^2x_1 + f_1x_1^2 + \ldots f_l^2x_l+f_lx_l^2 = 0.
\]
Because the only relations in $H^*(\Gamma,\F_2)$ involve squares, we get that
\[
\mathcal B_3 := \{x_ix_jx_k : i < j < k\}
\]
is a basis of $H^3(\Gamma,\F_2)$. Hence, 
\[
0 = f_l^2x_l+f_lx_l^2 = f_l^2x_l+f_l\alpha_lx_l = (f_l^2+f_l\alpha_l)x_l.
\]
Because $\mathcal B_3$ is a basis and $f_l^2+f_l\alpha_l \in \F_2[x_1,\ldots,x_{l-1}]$, we get that $f_l(f_l+\alpha_l) = 0$. By assumption, $f_l$ and $f_l+\alpha_l$ are non-zero polynomials. Hence, $\theta_k$ is a summand of $f_l(f_l+\alpha_l)$ for some $k$. Choose maximal such $k$. This means, that $f_l \in \F_2[x_1,\ldots,x_k]$ is such that 
\begin{equation}
\label{eq:kl1}
f_{kl} = 1
\end{equation}
and $\alpha_l \in \F_2[x_1,\ldots,x_{k-1}]$. We get
\[
\theta_k = x_k^2 + \sum_{i < k} f_{il}x_ix_k + \sum_{i < k} (f_{il}+a_{il})x_ix_k = x_k^2 + \left(\sum_{i < k} a_{il}x_i \right)x_k = x_k^2 + \alpha_lx_k,
\]
since $a_{il} = 0$ for $i \geq k$. Hence $\alpha_k x_k = \alpha_l x_k$. Both sides of the equality are sums of basis elements, thus $\alpha_k = \alpha_l$ and $A^{(k)} = A^{(l)}$. By \eqref{eq:kl0} and \eqref{eq:kl1} we get a contradiction, and the result follows.
\end{proof}

\section{\texorpdfstring{\Spinc}{Spinc} structures on real Bott manifolds}
\label{sec:rbm_spinc}

We keep the notation of previous sections. Let $w_2:=w_2(M)$ be the second Stiefel-Whitney class of an orientable real Bott manifold $M$, defined by a matrix $A \in \F_2^{n \times n}$. Recall, that $M$ admits a \spinc structure if and only if $w_2 \in \img \rho^{(2)}$. Our goal in this section is to describe this condition in terms of certain properties of the matrix $A$. Recall, that we think of $H^*(\Gamma, \F_2)$ as a ring with relations (see Remark \ref{remark:relations}), so that $w_2 \in F_2[x_1,\ldots,x_n]$. Let
\[
w_2 = \sum_{i \leq j} \alpha_{ij} x_ix_j
\]
and define a new polynomial $w_2' = \sum \alpha_{ij}' x_ix_j$, where
\[
\alpha_{ij}' := 
\left\{
\begin{array}{ll}
0 & \text{ if } i=j \text{ or } A^{(i)} = A^{(j)}\\
\alpha_{ij} & \text{ otherwise}
\end{array}
\right.
\]
for all $i \leq j$. Let
\[
w_2' = \sum_{j=1}^n \alpha_j' x_j,
\]
where $\alpha_j' = \sum_{i=1}^{j-1} \alpha_{ij}'x_i$ for $1 \leq j \leq n$.
Note, that $w_2'$ comes from $w_2$ by eliminating monomials, which are squares and elements of $S_2$. By Corollary \ref{corollary:squares_in_kernel} and Theorem \ref{theorem:image} we thus get the following criterion for the existence of \spinc structure on $M$:

\begin{theorem}
\label{theorem:spinc}
$M$ admits a \spinc structure if and only if
\begin{equation}
\label{eq:spinc_condition_derivative}
\forall_{1 \leq j \leq n} \alpha_j' = 0 \vee \alpha_j' = \alpha_j.
\end{equation}
\end{theorem}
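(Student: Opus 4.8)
The plan is to reduce the $\spinc$ condition $w_2 \in \img\rho^{(2)}$ to a statement about the polynomial $w_2'$. By Theorem~\ref{theorem:image}, together with Corollary~\ref{corollary:squares_in_kernel}, the image $\img\rho^{(2)}$ is spanned by $S \cup S_1 \cup S_2$, where $S = \vspan\{x_j^2\}$, $S_1 = \{\alpha_j x_j : A^{(j)} \neq 0\}$ and $S_2 = \{x_k x_l : A^{(k)} = A^{(l)}\}$; here I am using that the squares $x_j^2 = \alpha_j x_j$ lie in $\vspan S_1$ already, so in fact $\img\rho^{(2)} = \vspan(S_1 \cup S_2)$. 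The first step is therefore to observe that $w_2 \in \img\rho^{(2)}$ if and only if $w_2$, after we delete all its monomials lying in $\vspan S_2$ (the monomials $x_ix_j$ with $A^{(i)}=A^{(j)}$, which includes the squares), lands in $\vspan S_1$. By construction, that deleted-down polynomial is exactly $w_2'$. So the task becomes: $w_2' \in \vspan S_1$ if and only if \eqref{eq:spinc_condition_derivative} holds.

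Next I would analyze membership in $\vspan S_1 = \vspan\{\alpha_j x_j : 1 \le j \le n,\ A^{(j)} \neq 0\}$ using the basis $\mathcal B_2 = \{x_ix_j : i < j\}$ of $H^2(\Gamma,\F_2)$. Writing $w_2' = \sum_{j=1}^n \alpha_j' x_j$ with $\alpha_j' \in \F_2[x_1,\dots,x_{j-1}]$, I want to compare this with a general element $\sum_j \epsilon_j \alpha_j x_j$ of $\vspan S_1$, where $\epsilon_j \in \F_2$ and $\epsilon_j$ is forced to be $0$ when $A^{(j)} = 0$ (equivalently, when $\alpha_j = 0$, by the second equality in \eqref{eq:alpha_j} and Corollary~\ref{corollary:squares}). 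Since $\alpha_j x_j$ contributes only monomials $x_i x_j$ with $i < j$, the coefficient of $x_ix_j$ ($i<j$) in $\sum_j \epsilon_j \alpha_j x_j$ is $\epsilon_j a_{ij}$, which depends only on the index $j$ through the scalar $\epsilon_j$. Matching the $x_j$-component coefficient-wise, $w_2' \in \vspan S_1$ if and only if for each $j$ there is $\epsilon_j \in \F_2$ with $\alpha_j' = \epsilon_j \alpha_j$; that is, $\alpha_j' = 0$ or $\alpha_j' = \alpha_j$, which is precisely \eqref{eq:spinc_condition_derivative}. One has to check the case $A^{(j)} = 0$ separately: then $\alpha_j = 0$, so the only available choice is $\epsilon_j = 0$ and the condition $\alpha_j' = 0 \vee \alpha_j' = \alpha_j$ reads $\alpha_j' = 0$, which is automatic since in that case $w_2'$ has been defined to kill all monomials $x_ix_j$ with $A^{(i)} = A^{(j)}$ — in particular $A^{(i)} = 0$ is not forced, so one should instead note that $\alpha_j' = \alpha_j = 0$ follows because $A^{(j)} = 0$ makes $\alpha_{ij}' $ involve a zero column; I would spell this edge case out carefully.

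The one genuine subtlety — and the step I expect to be the main obstacle — is the bookkeeping that shows deleting the $S_2$-monomials from $w_2$ is harmless, i.e. that $w_2 \in \vspan(S_1 \cup S_2)$ is equivalent to $w_2' \in \vspan S_1$. This needs the fact that $S_1$ and $S_2$ together are linearly independent (they form a basis of $\img\rho^{(2)}$ by Theorem~\ref{theorem:image}), so that the $S_2$-part of any representation of $w_2$ is uniquely determined and can be subtracted off without affecting whether the remainder lies in $\vspan S_1$. Concretely: if $w_2 = \sigma_1 + \sigma_2$ with $\sigma_i \in \vspan S_i$, then since every monomial $x_kx_l$ appearing in $\sigma_2$ has $A^{(k)}=A^{(l)}$ and no such monomial appears in any $\alpha_j x_j \in S_1$ (as $\alpha_j = \alpha_k$ would force $A^{(j)} = A^{(k)}$, making $x_kx_j \in S_2$ not a "free" monomial of $\alpha_j x_j$ — here care is needed), the monomials of $w_2$ with $A^{(k)}=A^{(l)}$ all come from $\sigma_2$ and the rest from $\sigma_1$; deleting the former yields exactly $w_2'$ and shows $w_2' = \sigma_1 \in \vspan S_1$. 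Conversely, if $w_2' \in \vspan S_1$ then $w_2 = w_2' + (w_2 - w_2')$ and $w_2 - w_2' \in \vspan S_2$ by the very definition of $w_2'$, so $w_2 \in \vspan(S_1 \cup S_2) = \img\rho^{(2)}$. Assembling these observations gives the equivalence claimed in the theorem.
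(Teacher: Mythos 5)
Your proof is correct and takes essentially the same route as the paper: the paper's own argument just invokes Theorem \ref{theorem:image} and the definition of $w_2'$ to reduce the existence of a \spinc structure to $w_2' \in \vspan S_1$, declaring the equivalence with \eqref{eq:spinc_condition_derivative} to be clear, while you fill in the coefficient-matching and the bookkeeping showing that deleting the square and $S_2$ monomials is harmless. The only muddled spot is your aside on the case $A^{(j)}=0$ --- there $\alpha_j'$ need not vanish automatically --- but this is harmless, since both sides of the equivalence then reduce to the same requirement $\alpha_j'=0$.
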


\begin{proof}
By the discussion preceding the theorem and Theorem \ref{theorem:image}, $M$ admits a \spinc structure if and only if $w_2' \in \vspan S_1$. This is clearly equivalent to the condition \eqref{eq:spinc_condition_derivative}.
\end{proof}

\begin{remark}
Note, that the above criterion does not depend on choosing representative of the second Stiefel-Whitney class of $M$. If $\hat w_2$ is another polynomial representing it, then $w_2 - \hat w_2 \in \vspan\{ \theta_1, \ldots, \theta_n \}$, hence $w_2' - \hat w_2' \in \vspan S_1$. Therefore, condition \eqref{eq:spinc_condition_derivative} is equivalently satisfied by $w_2$ and $\hat w_2$.
\end{remark}

By \cite[Formula (2-4)]{KM09} one gets, that the total Stiefel-Whitney class of $M$ is equal to
\begin{equation}
\label{eq:total_sw_class}
w(M) = \prod_{j=1}^n(1+\alpha_j) \in H^*(\Gamma, \F_2).
\end{equation}
This in particular means that:
\begin{lemma}[{\cite[Lemma 2.2]{KM09}}]
\label{lemma:orientable}
$M$ is orientable if and only if sum in every row of $A$ is equal to $0$, i.e. every row of $A$ contains even numbers of $1$'s.
\end{lemma}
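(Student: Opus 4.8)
The plan is to read off the first Stiefel--Whitney class $w_1(M)$ directly from formula \eqref{eq:total_sw_class} and to use the standard fact that $M$ is orientable precisely when $w_1(M) = 0$. Since each $\alpha_j$ is homogeneous of degree $1$ in the graded ring $H^*(\Gamma,\F_2)$ (see \eqref{eq:alpha_j}), the degree-one component of the product $\prod_{j=1}^n(1+\alpha_j)$ receives no contribution from the cross terms $\alpha_j\alpha_k$, which lie in degree $\geq 2$, and is therefore simply $\sum_{j=1}^n \alpha_j$. Expanding and reindexing the sum over the strictly upper triangular positions of $A$ gives
\[
w_1(M) = \sum_{j=1}^n \alpha_j = \sum_{j=1}^n \sum_{i=1}^{j-1} a_{ij} x_i = \sum_{i=1}^n \left( \sum_{j=i+1}^n a_{ij} \right) x_i .
\]

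Next I would invoke the fact, which follows from Lemma \ref{lemma:h1_gamma}, that $H^1(\Gamma,\F_2) = \F_2^n$ with $x_1,\ldots,x_n$ a basis: indeed, by \eqref{eq:h1_on_generators} we have $x_i(s_j) = \delta_{ij}$, and since the images of $s_1,\ldots,s_n$ generate $H_1(\Gamma)$ while $\dim_{\F_2} H^1(\Gamma,\F_2) = n$, the relation $\sum_i c_i x_i = 0$ forces $c_j = x_j$-values to vanish, so the $x_i$ are linearly independent. Consequently the element displayed above vanishes in $H^1(\Gamma,\F_2)$ if and only if every coefficient $\sum_{j=i+1}^n a_{ij} \in \F_2$ is zero, that is, if and only if the $i$-th row of $A$ contains an even number of $1$'s, for each $1 \leq i \leq n$. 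This is exactly the asserted condition.

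I do not anticipate any genuine obstacle: the whole argument is a short bookkeeping exercise once one grants $w_1 = 0 \Leftrightarrow$ orientable and the linear independence of the $x_i$. The only two points to keep track of carefully are that the product $\prod_j(1+\alpha_j)$ is expanded in the graded ring, so its linear part is literally $\sum_j \alpha_j$, and that the reindexing correctly converts "sum over column index $j > i$'' into "sum over the $i$-th row of $A$''.
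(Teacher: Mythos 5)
Your proof is correct and takes essentially the route the paper intends: Lemma \ref{lemma:orientable} is presented there as an immediate consequence of the total Stiefel--Whitney class formula \eqref{eq:total_sw_class} (with the statement credited to \cite{KM09}), and your extraction of the degree-one component $w_1(M)=\sum_{j}\alpha_j=\sum_{i}\bigl(\sum_{j>i}a_{ij}\bigr)x_i$, combined with the linear independence of $x_1,\ldots,x_n$ in $H^1(\Gamma,\F_2)$ and the criterion $w_1(M)=0\Leftrightarrow M$ orientable, is exactly the bookkeeping that justifies it.
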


\begin{lemma}
\label{lemma:square_free}
Let $w^{sf}_2$ denote the square-free part degree two component in the expansion of the total Stiefel-Whitney class \eqref{eq:total_sw_class}. Then
\[
w^{sf}_2 = \sum_{k < l} \langle A_{(k)}, A_{(l)} \rangle x_kx_l,
\]
where $A_{(k)}$ denotes the $k$-th row of $A$ and $\langle A_{(k)}, A_{(l)} \rangle$ is the scalar product, i.e.
\[
\langle A_{(k)}, A_{(l)} \rangle := \sum_{j=1}^n a_{kj}a_{lj},
\]
for $1 \leq k \leq l \leq n$.
\end{lemma}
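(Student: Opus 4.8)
The plan is to compute directly with the product in \eqref{eq:total_sw_class}. Since each $\alpha_j$ is a homogeneous degree-one element of $\F_2[x_1,\dots,x_n]$, the degree-two component of $w(M)=\prod_{j=1}^n(1+\alpha_j)$ is $\sum_{1\le j<j'\le n}\alpha_j\alpha_{j'}$. Substituting the expression \eqref{eq:alpha_j} for $\alpha_j$, each factor expands as $\alpha_j\alpha_{j'}=\sum_{i,k}a_{ij}a_{kj'}\,x_ix_k$; discarding the terms with $i=k$ (these are the $x_i^2$'s), its square-free part is $\sum_{i<k}(a_{ij}a_{kj'}+a_{kj}a_{ij'})\,x_ix_k$. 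Note that no relation of $H^*(\Gamma,\F_2)$ is used here: ``square-free'' refers to the literal polynomial expansion of the total Stiefel-Whitney class.

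Next I would interchange the two summations, writing $w_2^{sf}=\sum_{i<k}c_{ik}\,x_ix_k$ with $c_{ik}=\sum_{j<j'}(a_{ij}a_{kj'}+a_{kj}a_{ij'})$. Relabelling $j\leftrightarrow j'$ in the second summand identifies this with the sum over all \emph{ordered} pairs of distinct column indices, $c_{ik}=\sum_{j\ne j'}a_{ij}a_{kj'}$, where $j,j'$ may be taken to range over $\{1,\dots,n\}$ since $A$ is strictly upper triangular. Completing this to a full double sum and subtracting the diagonal gives $c_{ik}=\bigl(\sum_j a_{ij}\bigr)\bigl(\sum_{j'}a_{kj'}\bigr)-\sum_j a_{ij}a_{kj}$.

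Finally I would invoke orientability: by Lemma \ref{lemma:orientable} every row of $A$ sums to $0$ in $\F_2$, so the first product vanishes and $c_{ik}=\sum_j a_{ij}a_{kj}=\langle A_{(i)},A_{(k)}\rangle$, the sign being irrelevant over $\F_2$, which is precisely the asserted formula. The computation is purely formal; the only points needing care are the combinatorial bookkeeping when passing from $\sum_{j<j'}$ to $\sum_{j\ne j'}$ and then to the full product, and remembering to use Lemma \ref{lemma:orientable} at the end -- without orientability the extra term $\bigl(\sum_j a_{ij}\bigr)\bigl(\sum_j a_{kj}\bigr)$ would survive.
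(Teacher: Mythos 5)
Your proof is correct and follows essentially the same route as the paper: expand the degree-two part of $\prod_j(1+\alpha_j)$, isolate the coefficient of $x_kx_l$, and invoke Lemma \ref{lemma:orientable} to make the cross term vanish. The only difference is bookkeeping --- you factor $\sum_{j\ne j'}a_{ij}a_{kj'}$ as the full product minus the diagonal, whereas the paper counts columns by type ($s_{10},s_{01},s_{11}$) and uses orientability to equate these counts mod $2$; both reduce to $\langle A_{(i)},A_{(k)}\rangle$.
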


\begin{proof}
Let $1 \leq k < l \leq n$. For $x,y \in \F_2$, let
\[
s_{xy} = \#\{ 1 \leq j \leq n : a_{kj} = x \text{ and } a_{lj} = y \}.
\]
Using formula \eqref{eq:alpha_j}, an easy calculation shows that the coefficient of $x_kx_l$ in $w^{sf}_2$ is equal to 
\[
s_{10}s_{01}+s_{10}s_{11}+s_{11}s_{01} \bmod 2.
\]
Since $M$ is orientable, by Lemma \ref{lemma:orientable} both $s_{10}+s_{11}$ and $s_{01}+s_{11}$ are even integers. Hence, in $\F_2$, we have $s_{10}=s_{11}=s_{01}$ and the above formula has the following form
\[
s_{10}s_{01}+s_{10}s_{11}+s_{11}s_{01} = 3 \cdot s_{11}^2 = s_{11} = \langle A_{(k)}, A_{(l)} \rangle.
\]
\end{proof}

Now, some summands of $w_2^{sf}$ may lie in $\vspan S_2$. If we rule them out of $w_2^{sf}$, we simply get $w_2'$. We have the following combinatorial criterion on the existence of \spinc structures on real Bott manifolds:
\begin{theorem}
\label{theorem:spinc_combinatorial}
Let $M$ be an orientable real Bott manifold, defined by a matrix $A \in \F_2^{n \times n}$. Let $A' = [a'_{ij}]\in \F_2^{n \times n}$ be defined as follows:
\[
\forall_{1 \leq i,j \leq n} \; a'_{ij} =
\left\{
\begin{array}{ll}
0 & \text{ if } i \leq j \text{ or } A^{(i)} = A^{(j)}\\
\langle A_{(i)}, A_{(j)} \rangle & \text{ otherwise } 
\end{array}
\right.
\]
Then $M$ admits a \spinc structure if and only if
\begin{equation}
\label{eq:spinc_cols}
A'^{(j)} = 0 \text{ or } A'^{(j)}  = A^{(j)}
\end{equation}
for every $3 \leq j \leq n-2$.
\end{theorem}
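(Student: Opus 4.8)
The plan is to read the criterion of Theorem~\ref{theorem:spinc} off a single, convenient polynomial representative of $w_2:=w_2(M)$. By the remark following Theorem~\ref{theorem:spinc}, condition~\eqref{eq:spinc_condition_derivative} may be tested on \emph{any} polynomial whose class in $H^2(\Gamma,\F_2)$ equals $w_2$. Since the ideal $I_A$ is homogeneous (generated in degree two), the ring $H^*(\Gamma,\F_2)$ is graded, so by~\eqref{eq:total_sw_class} the degree-two homogeneous component $e_2:=\sum_{1\le j<k\le n}\alpha_j\alpha_k$ of $\prod_{j=1}^n(1+\alpha_j)$ represents $w_2$. I would use $e_2$ as the representative and then identify $(e_2)'$ with the data encoded by $A'$.

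First I would observe that each $\alpha_j\alpha_k$ is a product of two linear forms, so the only non-square-free monomials occurring in $e_2$ are pure squares $x_i^2$. The priming operation of Theorem~\ref{theorem:spinc} sets the coefficient of every $x_i^2$ to zero, so $(e_2)'$ coincides with the primed \emph{square-free} part of $e_2$; by Lemma~\ref{lemma:square_free} that square-free part is $w_2^{sf}=\sum_{k<l}\langle A_{(k)},A_{(l)}\rangle\,x_kx_l$, and priming further kills exactly the monomials $x_kx_l$ with $A^{(k)}=A^{(l)}$ (the elements of $S_2$). Thus
\[
(e_2)' \;=\; \sum_{\substack{1\le k<l\le n\\ A^{(k)}\neq A^{(l)}}}\langle A_{(k)},A_{(l)}\rangle\,x_kx_l .
\]
Writing $(e_2)'=\sum_j\alpha_j'x_j$ with $\alpha_j'=\sum_{i<j}\alpha_{ij}'x_i$, the scalars $\alpha_{ij}'$ are, by construction, exactly the entries of the matrix $A'$ of the statement, while the coefficients of $\alpha_j=\sum_{i<j}a_{ij}x_i$ are the entries of the column $A^{(j)}$; hence~\eqref{eq:spinc_condition_derivative}, rewritten in terms of $A$ and $A'$, is precisely the requirement that $A'^{(j)}=0$ or $A'^{(j)}=A^{(j)}$ for every $1\le j\le n$.

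Next I would trim the range of $j$ to $3\le j\le n-2$ by showing that $j\in\{1,2,n-1,n\}$ impose no condition. For $j=1$ the form $\alpha_1'$ is an empty sum. For $j=2$, $\alpha_2'$ is a scalar multiple of $x_1$; if nonzero, then since $A^{(1)}=0$ the defining rule of $A'$ forces $A^{(2)}\neq 0$, i.e.\ $a_{12}=1$, and then $\alpha_2'\in\{0,x_1\}=\{0,\alpha_2\}$. For $j\in\{n-1,n\}$ the $j$-th row of $A$ vanishes — trivially for $j=n$, and for $j=n-1$ because orientability (Lemma~\ref{lemma:orientable}) forces $a_{n-1,n}=0$ — so $\langle A_{(i)},A_{(j)}\rangle=0$ for all $i$ and $\alpha_j'=0$. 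In each of these cases~\eqref{eq:spinc_condition_derivative} is automatic, so the criterion reduces to~\eqref{eq:spinc_cols}.

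The hard part will be the second step: verifying that $e_2$ really is a representative of $w_2$ (so that the remark after Theorem~\ref{theorem:spinc} legitimizes priming it), and that priming removes the \emph{entire} non-square-free part of $e_2$ — leaving nothing beyond the primed $w_2^{sf}$ of Lemma~\ref{lemma:square_free}. The matching of the coefficients of $(e_2)'$ with the entries of $A'$, and the short case analysis discarding $j\in\{1,2,n-1,n\}$, should then be routine bookkeeping.
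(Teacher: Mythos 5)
Your proposal is correct and follows essentially the same route as the paper: reduce to Theorem \ref{theorem:spinc} via Lemma \ref{lemma:square_free} by priming the degree-two component of the total Stiefel--Whitney class (so that only the primed square-free part $w_2^{sf}$ survives and its coefficients are the entries of $A'$), then discard $j\in\{1,2,n-1,n\}$ by exactly the same case analysis ($A'^{(1)}=0$ trivially, $a_{12}$ being the only possible entry in column $2$, and orientability forcing $A_{(n-1)}=A_{(n)}=0$). The step you flag as the ``hard part'' is in fact routine and is handled by your own grading observation together with the remark following Theorem \ref{theorem:spinc}, which the paper itself relies on implicitly.
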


\begin{proof}
By Theorem \ref{theorem:spinc} and Lemma \ref{lemma:square_free} $M$ admits a \spinc structure if and only if \eqref{eq:spinc_cols} holds for every $1 \leq j \leq n$. We will show, that this condition always holds for $j \in \{1,2,n-1,n\}$. Let $A = [a_{ij}]$. By definition $A'^{(1)} = 0$ and if $A'^{(2)} \neq 0$, then $a'_{12} = 1$. Hence $0 = A^{(1)} \neq A^{(2)}$. Since $A$ is a strictly upper triangular matrix, we get that $a_{12} = 1$ and it is the only non-zero element in the second column of $A$. Therefore $A'^{(2)} = A^{(2)}$. Now, orientability of $M$ implies, that $A_{(n-1)} = A_{(n)} = 0$ (see Lemma \ref{lemma:orientable}). By definition of $A'$, we get that $A'^{(n-1)} = A'^{(n)} = 0$.
\end{proof}

It is well known, that every orientable four-dimensional manifold admits a \spinc structure (see \cite{KR85}). In the case of Theorem \ref{theorem:spinc_combinatorial}, this becomes trivial. However, we can use it in dimension $5$ to get
\begin{corollary}
Let $M$ be an orientable $5$-dimensional real Bott manifold, defined by a matrix $A=[a_{ij}]$. Then $M$ admits a \spinc structure if and only if
\begin{enumerate}[label=(\alph*), widest=a]
\item $A_{(3)} = 0$ or
\item $a_{12} = 0$ or \label{item:b}
\item $a_{23} = 0$. \label{item:c}
\end{enumerate}
\end{corollary}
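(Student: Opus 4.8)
The plan is to specialize Theorem~\ref{theorem:spinc_combinatorial} to $n = 5$, where the range $3 \le j \le n - 2$ collapses to the single value $j = 3$. Thus $M$ admits a \spinc structure if and only if $A'^{(3)} = 0$ or $A'^{(3)} = A^{(3)}$, and the first step is to unwind these two columns. Since $A$ is strictly upper triangular, $A^{(3)} = (a_{13}, a_{23}, 0, 0, 0)^{\mathsf T}$; from the definition of $A'$ the only entries of $A'^{(3)}$ that can be nonzero are $a'_{13}$ and $a'_{23}$, each of which is either $0$ (when $A^{(1)} = A^{(3)}$, resp.\ $A^{(2)} = A^{(3)}$) or the scalar product $\langle A_{(1)}, A_{(3)}\rangle$, resp.\ $\langle A_{(2)}, A_{(3)}\rangle$.

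Next I would invoke orientability (Lemma~\ref{lemma:orientable}) to put these scalar products into usable form. Vanishing of the row sums of $A$ gives $a_{34} = a_{35}$ --- write $t$ for this common value --- together with $a_{14} + a_{15} = a_{12} + a_{13}$, $a_{24} + a_{25} = a_{23}$ and $a_{45} = 0$, so that rows $4$ and $5$ of $A$ vanish. Since $A_{(1)}$ and $A_{(3)}$ overlap only in columns $4$ and $5$, this yields $\langle A_{(1)}, A_{(3)}\rangle = t(a_{14} + a_{15}) = t(a_{12} + a_{13})$, and similarly $\langle A_{(2)}, A_{(3)}\rangle = t(a_{24} + a_{25}) = t\, a_{23}$. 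Everything is thus governed by the bits $t = a_{34}$, $a_{12}$, $a_{23}$ (together with $a_{13}$), and the three conditions (a), (b), (c) are precisely $t = 0$, $a_{12} = 0$, $a_{23} = 0$.

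What remains is a short finite check, and this is the only place that needs care, since the clauses ``$A^{(1)} = A^{(3)}$'' and ``$A^{(2)} = A^{(3)}$'' in the definition of $A'$ can zero out an entry. For the ``if'' direction one argues by cases: if $t = 0$ both scalar products vanish, so $A'^{(3)} = 0$; if $a_{23} = 0$ then $a'_{23} = 0$, so $A'^{(3)} = (a'_{13}, 0, 0, 0, 0)^{\mathsf T}$ and $A^{(3)} = (a_{13}, 0, 0, 0, 0)^{\mathsf T}$, where $a_{13} = 0$ forces $a'_{13} = 0$ and $A'^{(3)} = A^{(3)} = 0$, while for $a_{13} = 1$ the column $A'^{(3)}$ is $0$ or $A^{(3)}$ according as $a'_{13}$ is $0$ or $1$; if $a_{12} = 0$ and $A^{(3)} \neq 0$, then $A^{(1)} = A^{(2)} = 0 \neq A^{(3)}$, hence $a'_{13} = t\, a_{13}$, $a'_{23} = t\, a_{23}$ and $A'^{(3)} = t\, A^{(3)} \in \{0, A^{(3)}\}$ (the case $A^{(3)} = 0$ being trivial). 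For the ``only if'' direction I would argue by contraposition: if $t = a_{12} = a_{23} = 1$ then $A^{(3)} = (a_{13}, 1, 0, 0, 0)^{\mathsf T} \neq 0$, $A^{(1)} = 0 \neq A^{(3)}$ and $A^{(2)} = (1, 0, 0, 0, 0)^{\mathsf T} \neq A^{(3)}$, so $a'_{13} = t(a_{12} + a_{13}) = 1 + a_{13}$ and $a'_{23} = t\, a_{23} = 1$; then $A'^{(3)} = (1 + a_{13}, 1, 0, 0, 0)^{\mathsf T}$ is neither $0$ (its second coordinate is $1$) nor $A^{(3)}$ (their first coordinates differ), so by Theorem~\ref{theorem:spinc_combinatorial} the manifold $M$ is not \spinc. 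The only genuine obstacle is keeping the ``equal columns'' bookkeeping straight through this handful of subcases; once the scalar products have been simplified using orientability, there is no conceptual difficulty.
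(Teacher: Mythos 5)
Your proof is correct and takes essentially the same route as the paper: reduce Theorem~\ref{theorem:spinc_combinatorial} to the single column $j=3$, use orientability to force $A_{(3)}\in\{0,(0,0,0,1,1)\}$, and finish with a finite check on $a_{12},a_{13},a_{23}$. The paper carries out that check by listing the six admissible $2\times2$ submatrices, whereas you streamline it via the identities $a'_{13}=a_{34}(a_{12}+a_{13})$ and $a'_{23}=a_{34}\,a_{23}$; the content is the same.
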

\begin{proof}
We need to check condition \eqref{eq:spinc_cols} for $j=3$ only. If $A_{(3)} = 0$, then $A'^{(3)} = 0$. If $A_{(3)} \neq 0$, then $A_{(3)} = [0\,0\,0\,1\,1]$, since $M$ is orientable. Direct verification shows, that \eqref{eq:spinc_cols} holds if and only if submatrix
\[
\begin{bmatrix}
a_{12} & a_{13}\\
     0 & a_{23}
\end{bmatrix}
\]
of $A$ is in the one of the following six forms, which give exactly conditions \ref{item:b} and \ref{item:c}:
\[
\begin{bmatrix}
0 & 0\\
0 & 0
\end{bmatrix},
~\
\begin{bmatrix}
0 & 1\\
0 & 0
\end{bmatrix},
~\
\begin{bmatrix}
0 & 0\\
0 & 1
\end{bmatrix},
~\
\begin{bmatrix}
0 & 1\\
0 & 1
\end{bmatrix},
~\
\begin{bmatrix}
1 & 0\\
0 & 0
\end{bmatrix},
~\
\begin{bmatrix}
1 & 1\\
0 & 0
\end{bmatrix}.
\]
\end{proof}

\section{\texorpdfstring{\Spinc}{Spinc} structures in low dimensions}
\label{sec:numbers}

Based on Theorem \ref{theorem:spinc_combinatorial} we have built an application \cite{LData24}, which counts the number of real Bott manifolds with \spinc structures in dimensions up to 10. Note that in dimension $n$ there exist $\displaystyle 2^{\binom{n-1}{2}}$ of the orientable ones. In addition, with the usage of the criterion on existence of spin structures concerning vanishing second Stiefel-Whitney class, the number of spin real Bott manifolds has also been calculated. The results are shown in Table \ref{table:numbers}.

\begin{table}
\begin{center}
\def\arraystretch{1.5}
\begin{tabular}{l|c|c|c|c|c|c|c}
\textbf{dimension} & 4 & 5 & 6 & 7 & 8 & 9 & 10\\ \hline
\textbf{no. of \spinc RBM} & 8 & 52 & 592 & 7968 & 165712 & 4669464 & 191557024\\ \hline
\textbf{no. of spin RBM} & 6 & 24 & 72 & 672 & 1536 & 4416 & 181248 
\end{tabular}
\end{center}

\caption{Number of \spinc and spin real Bott manifolds (RBM) in low dimensions}
\label{table:numbers}
\end{table}

\bibliographystyle{plain}
\bibliography{bibl}

\end{document}